\def\Uf{\ensuremath\mathrm{Uf}}
\def\uf{\ensuremath\mathfrak{F}^{\mathfrak{ue}}}
\def\ue{\ensuremath\mathfrak{ue}}
\def\dom{\ensuremath\text{dom}}
\def\>{\ensuremath\rangle}
\def\<{\ensuremath\langle}
\def\tp{\ensuremath\mathsf{tp}}
\newtheorem{theorem}{\bfseries Theorem}[section]
\newtheorem{proposition}[theorem]{\bfseries Proposition}
\newtheorem{lemma}[theorem]{\bfseries Lemma}
\newtheorem{claim}[theorem]{\bfseries Claim}
\newtheorem{fact}[theorem]{\bfseries Fact}
\newtheorem{corollary}[theorem]{\bfseries Corollary}
\theoremstyle{definition}  
\newtheorem{remark}[theorem]{\bfseries Remark}
\newtheorem{definition}[theorem]{\bfseries Definition}
\newtheorem{observation}[theorem]{\bfseries Observation}
\newtheorem{example}[theorem]{\bfseries Example}
\newenvironment{manualtheorem}[1]{%
	\manualtheoreminner
}{\endmanualtheoreminner}
\author[]{Zalán Molnár\thanks{Eötvös Loránd University, Department of Logic, Budapest, Hungary\\ \text{  } \text{  }\text{   } \text{  }molnar.zalan.agoston@btk.elte.hu}} 
\title{Notes on ultrafilter extensions of almost bounded structures}
\date{}
\let\models=\vDash
\renewcommand{\setminus}{\smallsetminus}
\def\cA{\mathcal{A}}
\def\cB{\mathcal{B}}
\def\cC{\mathcal{C}}
\def\cD{\mathcal{D}}
\def\cF{\mathcal{F}}
\def\cH{\mathcal{H}}
\def\cN{\mathcal{N}}
\def\cP{\mathcal{P}}
\def\cS{\mathcal{S}}
\def\Uf{\ensuremath\mathrm{Uf}}
\def\uf{\ensuremath\cA^{\mathfrak{ue}}}
\def\ue{\ensuremath\mathfrak{ue}}
\def\dom{\ensuremath\text{dom}}
\tikzset{
	modal/.style={>=stealth’,shorten >=1pt,shorten <=1pt,auto,node distance=1.5cm,
		semithick},
	world/.style={circle, draw,minimum size=.1cm,fill=gray!15},
	point/.style={circle,draw,inner sep=0.3mm,fill=black},
	circ/.style={circle,draw,inner sep=0.4mm,fill=white},
	reflexive above/.style={->,loop,looseness=7,in=120,out=60},
	reflexive below/.style={->,loop,looseness=7,in=240,out=300},
	reflexive left/.style={->,loop,looseness=7,in=150,out=210},
	reflexive right/.style={->,loop,looseness=7,in=30,out=330}
}
\def\Uf{\ensuremath\mathrm{Uf}}
\def\uf{\ensuremath\cA^{\mathfrak{ue}}}
\def\ue{\ensuremath\mathfrak{ue}}
\def\dom{\ensuremath\text{dom}}
\begin{document}
	
	\maketitle
	
	\begin{abstract}
		\noindent  We extend the results of \cite{molnar} and contribute to the problems of \cite{Saveliev} by studying the connections  between the two constructions: the one coming from modal logic, called ultrafilter extensions, and ultrapowers. Throughout we restrict ourselves to relational structures with one binary relation. In  \cite{molnar},  a class  called bounded structures with a universal finite bound on the maximal in-and out-degree was studied. It was shown that members of this class can be elementarily embedded to their ultrafilter extensions.  Comparing  ultrafilter extensions and ultrapowers, it seems that the real challenge is  when  the degree has no  global finite bound, or there are elements with infinite degree. This is a first step towards this direction by slightly relaxing the notion of boundedness, called almost bounded structures. Among others, we show that members of this class are still elementary substructures of their extensions,  elementary embeddings can be lifted up to the extensions, moreover for  countable case the extensions are isomorphic to certain ultrapowers. We also comment on the modal logics they generate.
	\end{abstract}
	\noindent \textbf{keywords:} ultrafilter extension,  modal logic, almost bounded structures
	\section{Overview}
	\subsection{Background and notation}
	\paragraph{Background.} This paper mainly extends the results of \cite{molnar} and contributes to  the problems related to \cite{Saveliev}. In general, we investigate the  model theoretic features of  ultrafilter extensions, the construction which emerged  from  modal logic and the theory of Boolean algebras with operations (BAO).  Many of the most intriguing problems in modal logic somehow revolve around the notion of ultrafilter extension 
	or its algebraic counterpart,  nowadays called canonical extension. Just to name the two most important:  Fine's conjecture \cite{fine,goldblatt1} and canonicity \cite{fine,goldblatt2}. These extensions were  implicitely introduced  in Jónsson--Tarski \cite{jonsson}, finalized by R. Goldblatt in \cite{goldblatt} and J. van Benthem in \cite{Benthem,correspondence}. Their importance  is also clear from a model theoretic approach. The analogy according to which ''\textit{ultrafilter extensions are to modal logic, what ultrapowers are for first-order logic}'' becomes  apparent when ''modalizing'' the Keisler-Shelah isomorphism theorem: Two (pointed) Kripke models are modally equivalent if and only if they have bisimilar ultrafilter extensions (cf. \cite{black}, Theorem 2.62). Throughout we leave aside the algebraic counterpart, and concentrate  on the ultrafilter extension of structures $\cA =\<A, R\>$ equipped with a single binary relation $R$. Our   interest is to understand how far the analogy can be stretched and investigate classes, where the  constructions not only have similar applications, but  actually share common model theoretic features. 
	
	In  \cite{molnar},  the class  called bounded structures was studied, in which a universal finite bound on the maximal in-and out-degree was postulated. It was shown that  members of this class enjoy the following  properties: 
	\begin{itemize}
		\item  each one   is  elementary substructure of its ultrafilter extension,
		\item elementary equivalence  is lifted up to their extensions,
		\item their modal logic  coincides with their ultrafilter extension and ultrapowers.
	\end{itemize}
	
	\noindent  In order to have   similar properties for other classes, it seems that the real challenge  is  when  the degree has no  global finite bound, or there are elements with infinite degree. This paper is a first step towards this direction by slightly relaxing the notion of boundedness. 
	\\

	\paragraph{Notation.} We shortly introduce the notation and terminology. Everything which is not covered here including notation is standard, and can be found in \cite{chang, black}. Fix $\cA=\<A, R\>$, by  $R_\infty$ we denote the set of nodes with infinite degree and  reserve the symbols $S$ and $P$ for the relations 
	\begin{align}
		P =\{\langle s,t\rangle\in R: s\in R_\infty \text{ or } t\in R_\infty\}; \qquad S= R\setminus P
	\end{align}
	
	For $Q\subseteq A$ we define the  operations  with their obvious meanings:
	\begin{align}
		\begin{split}
			Q[X] & = \{s\in A: (\exists w\in X)\,Q(w,s) \},\\
			\<Q\>(X) & = \{w\in A: (\exists s\in X)\,Q(w,s) \}.
		\end{split}
	\end{align}
	The \textit{ultrafilter  extension} of $\cA$ is the structure $\cA^\ue=\<\Uf(A), R^\ue\>$, where $\Uf(A )$ is the set of all ultrafilters over $A$ and $R^\ue$ is defined as:
	\begin{align}
		R^\ue(u,v) \overset{\text{def}}{\Leftrightarrow} \{\<R\>(X): X\in v\}\subseteq u\Leftrightarrow  \{R[X]: X\in u\}\subseteq v.
	\end{align}
	
	By $\pi_w = \{X \subseteq W : w \in X\}$ we denote the principal ultrafilter generated by $w$, while $\Uf^*(A)$ denotes the set of non-principal ultrafilters. The map $w\mapsto \pi_w$ is an embedding of $\cA$ into $\cA^\ue$. Throughtout, we consider the basic modal language with a single unary connective $\diamondsuit$, and $\Box$ abbreviates $\neg\diamondsuit\neg$. Modal formulas are interpreted in $\cA$ in the usual way: If $V: \mathsf{Prop}\to \cP(A)$ is a valuation of the propositional variables and $w\in A$, then $\<\cA,V\>, w\Vdash \varphi$ means that the modal formula $\varphi$ is true at $w$ in the model $\<\cA,V\>$.
	The only non-obvious case is 
	\begin{align}
		\<\cA,V\>, w\Vdash\diamondsuit \varphi\Leftrightarrow (\exists w\in A)R(w,v) \text{ and }\<\cA,V\>, w\Vdash\varphi. 
	\end{align}
	Finally, $\cA\Vdash\varphi$ means validity, that is $\<\cA,V\>, w\Vdash \varphi$,  for each $w\in A$ and  valuation $V$. $\Lambda(\cA) =\{\varphi: \cA\Vdash\varphi\}$ is called the modal logic of $\cA$, which can be seen as a fragment of  monadic second-order logic via the standard translation. It is well known that $\Lambda(\cA^\ue)\subseteq \Lambda(\cA)$. For canonical formulas the reverse  containment hold, but it is undecidable in general,  which  formulas (either modal or first-oder) are preserved under ultrafilter extensions (cf. \cite{black}).

	\begin{definition}
		A structure $\cA =\<A, R\>$ is called \textit{bounded}, if the maximal in-and out-degree of $\cA$ is universally bounded by some finite number. $\cA$ is \textit{almost bounded}, if except finitely many nodes with infinite degree, $\cA$ is bounded.
	\end{definition}
	
	\begin{remark}
		
		This seemingly innocent twist already calls for  attention. Bounded structures are  always generated substructures of their extensions (cf. \cite{molnar}, Theorem 3.5) in the sense of \cite{black}, Definition 3.13. On the other hand, for the almost bounded case a node with infinite degree will be related to many non-principal ultrafilters, resulting intricate connections between the ''new'' and ''old'' elements. However, we can always  have a full control over these connections through a natural decomposition of the  relation.
	\end{remark}
	\subsection{Results of the paper}
	
	Our  results are proven in Section 2. Some of the constructions are further generalizations of the methods introduced in \cite{molnar}, while decomposing the original relation into  suitable components. 
	
	\begin{manualtheorem}{\ref{thm:almost_bounded}}
		\textit{Let  $\cA$ be an almost bounded structure with $|A|=\lambda$.  If the ultrapower $\cA^*$ is at least  $2^\lambda$-regular , then $\cA\preceq\uf\preceq\cA^*$.}
	\end{manualtheorem}

	\begin{manualtheorem}{\ref{cor:elementary}}
		\textit{If $\cA_1$ and $\cA_0$ are almost bounded and $\cA_0\preceq \cA_1$, then $\cA^\ue_0\preceq \cA^\ue_1$.}
	\end{manualtheorem}

	\noindent	Releasing the condition on boundedness  to almost bounded structures, $\cA$ and $\cA^\ue$ might generate different modal logics. However, it coincides with the one generated by the ultrapowers. 
	\begin{manualtheorem}{\ref{thm:almost_modal}}
		\textit{Let $\cA$ be an almost bounded structure,  $\cA^*$ be an ultrapower  over some $\aleph_1$-incomplete ultrafilter. Then  $\Lambda(\uf)=\Lambda((\uf)^\ue) =\Lambda(\cA^*)$.}
	\end{manualtheorem}
	\noindent	As a consequence, if $\cA,\cB$ are almost bounded, then $\cA\equiv \cB$ implies $\Lambda(\cA^\ue)=\Lambda(\cB^\ue)$. If instead, we were assuming simply boundedness, $\cA\equiv \cB$ implies $\Lambda(\cA)=\Lambda(\cB)$.
	Thus, we have examples for all the interesting cases of classes $\mathsf{K}$ such that for each member $\cA\in \mathsf{K}$ we have
	\begin{enumerate}
		\item[(a)] $\cA \equiv \cA^\ue$, and $\Lambda(\cA)=\Lambda(\cA^\ue)$ (cf. bounded structures \cite{molnar}),
		\item[(b)]  $\cA \equiv \cA^\ue$, but $\Lambda(\cA)\neq\Lambda(\cA^\ue)$ (cf. Example \ref{emp}),
		\item[(c)] $\cA \not\equiv \cA^\ue$, but $\Lambda(\cA)=\Lambda(\cA^\ue)$ (limit ordinals $\alpha\geq \omega^2$ with $\<\alpha, \leq \>$).
	\end{enumerate}
	To see (c) consider the following example. For an ordinal $\alpha$ we let $\cA_{\alpha} = \<\alpha, \leq\>$. It is folklore (e.g.  \cite{benthem}) that $\Lambda(\cA_{\omega^2})=\mathbf{S4.3}$ and for every limit ordinal $\alpha \geq \omega^2$ we have $\Lambda(\cA_{\alpha})=\Lambda(\cA_{\omega^2})$. Indeed, by Bull's Theorem $\mathbf{S4.3}$ is finitely approximable, thus for each $\varphi\not\in \mathbf{S4.3}$ there is a finite $\mathbf{S4.3}$ frame $\cF_\varphi\not\Vdash \varphi$. Obviously $\mathbf{S4.3}\subseteq \Lambda(\cA_\alpha^\ue)$.  It is routine to check that  each such $\cF_\varphi$ is a $p$-morphic image of $\cA_{\alpha}^\ue$,  consequently $\Lambda(\cA_{\alpha}^\ue)= \mathbf{S4.3}$. On the other hand, $\cA_{\alpha}\not\equiv \cA_{\alpha}^\ue$. This is because  $\cA_{\alpha}\not\models \exists x\forall y (Rxy\to Rxy)$, whereas $\cA_{\alpha}^\ue\models \exists x\forall y (Rxy\to Rxy)$, since $\cA_{\alpha}^\ue$ has a ``final'' cluster of non-principal ultrafilters.

	\begin{manualtheorem}{\ref{sat}}
		\textit{Let $\cA$ be an almost bounded structure,  $\cA^*$ be an ultrapower over some $\kappa$-regular ultrafilter.  Then  $\cA^\ue$ is at least  $2^{2^{\aleph_0}}$, while $\cA^*$ is at least  $2^\kappa$-saturated.}
	\end{manualtheorem}	
	\noindent In particular, whenever $\cA$ is countable,  the index set is $|I| = 2^{\aleph_0}$ and $\cA^*$ is an ultrapower modulo some regular ultrafilter over $I$, then $\cA^\ue \cong \cA^*$. However, isomorphism cannot be achieved for larger cardinals in general, as Example \ref{ex:2} shows.

	\section{Almost bounded structures}
	
	\subsection{Coherent decomposition}

	We start with some basic observations regarding  ultrafilter extensions of almost bounded structures. From \cite{molnar}, Proposition 4.2 and 4.3 imply the following observation:
	\begin{observation}\label{obs_degree}
		Let $\cA$ be an almost bounded structure,  then	
		\begin{enumerate}\itemsep-1pt
			\item[(i)] Both $\<A,S\>$ and $\<\Uf(A), S^\ue\>$ are bounded structures,
			\item[(ii)] $R^\ue(u,u)$ \  iff\  $\{w\in A: R(w,w)\}\in u$.
		\end{enumerate}
	\end{observation}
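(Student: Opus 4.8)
The plan is to treat the two clauses separately, reading both off the coherent decomposition $R = S \sqcup P$ together with the bounded case from \cite{molnar}.

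For (i), I would first check directly that $\langle A, S\rangle$ is bounded. If $w \in R_\infty$, then by definition every $R$-edge incident to $w$ lies in $P$, so $w$ is $S$-isolated; if $w \notin R_\infty$, then its $S$-degree is at most its $R$-degree, which is bounded by the uniform finite number $N$ witnessing almost boundedness. Hence all in- and out-degrees in $\langle A, S\rangle$ are $\le N$. Boundedness of $\langle \Uf(A), S^\ue\rangle$ is then precisely the statement that ultrafilter extensions preserve boundedness, i.e. Propositions 4.2 and 4.3 of \cite{molnar} applied to the bounded structure $\langle A,S\rangle$; alternatively one recalls that a bounded structure is a generated substructure of its extension, which carries the same degree bound.

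For (ii), the direction $(\Leftarrow)$ and the principal case are immediate. Writing $D := \{w \in A : R(w,w)\}$, if $D \in u$ then for any $X \in u$ we have $X \cap D \in u$ and $X\cap D \subseteq R[X]$, since each $w \in X\cap D$ witnesses $w\in R[X]$ through its own loop; thus $R[X]\in u$, and as $X$ was arbitrary, $R^\ue(u,u)$ holds. For $u = \pi_w$ principal, testing the definition on $X=\{w\}$ shows that $R^\ue(\pi_w,\pi_w)$ unwinds to $R(w,w)$, i.e. to $D\in\pi_w$.

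The real work, and the step I expect to be the main obstacle, is $(\Rightarrow)$ for non-principal $u$: assuming $R^\ue(u,u)$, I must derive $D \in u$. I would argue by contradiction, supposing $E := A\setminus D\in u$, and exploit almost boundedness. Since $u$ is non-principal and $R_\infty$ is finite, $A\setminus R_\infty \in u$. Form the symmetric graph $G$ on $A$ joining distinct $w,s$ whenever $S(w,s)$ or $S(s,w)$; by (i) it has finite maximum degree, so the De Bruijn--Erdős theorem supplies a proper colouring $c\colon A\to\{0,\dots,m\}$ with finitely many colours. As $c$ has finite range, some colour class lies in $u$; intersecting it with $E\cap(A\setminus R_\infty)\in u$ yields $X\in u$ that is monochromatic, $R$-irreflexive, and disjoint from $R_\infty$. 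The point of the decomposition is that every $R$-edge out of $X$ either lands in $R_\infty$ (hence outside $X$) or, having both endpoints off $R_\infty$, is an $S$-edge between distinct vertices and so is properly coloured, forcing its target out of the colour class $X$. Either way $R[X]\cap X=\emptyset$, so $R[X]\notin u$, contradicting $R^\ue(u,u)$. The only delicate points are the appeal to De Bruijn--Erdős to colour the infinite bounded-degree graph, and the bookkeeping that edges leaking into the infinite-degree nodes are harmless precisely because $R_\infty$ is finite and avoided by every non-principal ultrafilter.
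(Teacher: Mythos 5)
Your proof is correct, but it takes a more self-contained route than the paper, which disposes of the whole observation with a citation: both clauses are stated to follow directly from Propositions 4.2 and 4.3 of \cite{molnar}. For (i) you and the paper essentially agree: check that the $S$-degrees are finitely bounded (nodes of $R_\infty$ are $S$-isolated, the rest inherit the bound from almost boundedness) and then invoke the preservation of boundedness under ultrafilter extensions from \cite{molnar}. For (ii) the paper's intended derivation is a reduction to the bounded reduct: the principal case is immediate, and for non-principal $u$ one notes $A\setminus R_\infty\in u$ (as $R_\infty$ is finite), so $R^\ue(u,u)$ forces $S^\ue(u,u)$ because $R[X\setminus R_\infty]\subseteq S[X]\cup R_\infty$ for every $X\in u$; then the bounded-case reflexivity characterization of \cite{molnar} applied to $\langle A,S\rangle$ yields $\{w\in A: S(w,w)\}\in u$, and this set is contained in $\{w\in A: R(w,w)\}$. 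You instead re-prove the combinatorial core directly for $R$: a proper colouring with finitely many colours of the bounded-degree graph of $S$-edges via De Bruijn--Erd\H{o}s, a monochromatic irreflexive set in $u$ avoiding $R_\infty$, and the resulting disjointness $R[X]\cap X=\emptyset$ contradicting $R^\ue(u,u)$. This is a genuine and correct argument; it makes explicit exactly where almost boundedness enters (non-principal ultrafilters avoid the finite set $R_\infty$, and the bounded part has finite chromatic number), essentially reconstructing the machinery that the paper's citation encapsulates, at the cost of invoking a compactness theorem for colourings where the paper's route buys brevity by delegating to prior work. One caveat: your ``alternative'' justification of (i) --- that a bounded structure is a generated substructure of its extension, ``which carries the same degree bound'' --- is not a proof; being a generated substructure controls only how the principal ultrafilters sit inside $\Uf(A)$ and says nothing about degrees among non-principal ultrafilters, so the citation to \cite{molnar} is the argument you should keep.
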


	Our goal is to show that every almost bounded structure $\cA$ is an elementary substructure of $\cA^\ue$. Instead of working directly with $R$, we decompose it into $S$ and $P$, and extend the language by constant symbols $d_w$, for each $w\in R_\infty$. The interpretations of the constants in $\cA$, $\uf$ or in an ultrapower are given in the obvious way.   Throughout  $\cA^* =\< A^*, R^*\>$ abbreviates any ultrapower of $\cA$ modulo some ultrafilter $\cD$ over an infinite index set $I$. Its elements are denoted by $[a]_\cD\in A^*$, for $a\in\, ^IA$. We let
	
	\begin{enumerate}\itemsep-2pt
		\item[•] $\cA^\circ =\< A, S, P, d_w \>_{w\in R_\infty} $,
		\item[•]  $\cA^\flat= \<\Uf(A), S^\ue, P^\ue, d_w\>_{w\in R_\infty}  =(\cA^\circ )^\ue$, 
		\item[•] $\cA^\sharp= \< A^*, S^*, P^*, d_w \>_{w\in R_\infty} = (\cA^\circ )^*$. 
	\end{enumerate}\index{$\cA^\bullet, \cA^\flat, \cA^\sharp$}
	The next proposition tells us that the decomposition is coherent in the sense that it commutes with their extensions.  
	
	\begin{proposition}\label{prop:iso}
		Let $\cA=\<A, R\>$ be an almost bounded structure with $s,t\in A$. If $u,v\in \Uf^*(A)$, then
		\begin{enumerate}\itemsep-1pt
			\item[(i)] 	$R^\ue(\pi_s,\pi_t) \text{ iff }  S^\ue(\pi_s,\pi_t) \text{ or }  P^\ue(\pi_s,\pi_t)$,
			\item[(ii)]  	$R^\ue(\pi_s,u) \text{ iff } P^\ue(\pi_s,u)$, similarly $ R^\ue(u,\pi_s)  \text{ iff }  P^\ue(u,\pi_s)$,
			\item[(iii)]  	$R^\ue(u,v) \text{ iff } S^\ue(u,v)$.
		\end{enumerate} 
		Moreover, exactly the same properties hold for  ultrapowers.
	\end{proposition}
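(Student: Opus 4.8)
The plan is to separate part (i) from (ii)--(iii) and to isolate the two finiteness facts that drive everything: since $\cA$ is almost bounded, $R_\infty$ is finite, and every node outside $R_\infty$ has finite in- and out-degree. Part (i) requires almost no work, because $w\mapsto\pi_w$ is an embedding, so $R^\ue(\pi_s,\pi_t)\Leftrightarrow R(s,t)$, and likewise $S^\ue(\pi_s,\pi_t)\Leftrightarrow S(s,t)$ and $P^\ue(\pi_s,\pi_t)\Leftrightarrow P(s,t)$; as $R=S\sqcup P$ by construction, (i) is immediate. The easy halves of (ii) and (iii) are also free: from $S\subseteq R$ and $P\subseteq R$ one gets $S^\ue\subseteq R^\ue$ and $P^\ue\subseteq R^\ue$ by monotonicity of the extension (if $X\in u$ and $P[X]\in v$, then $R[X]\supseteq P[X]$ forces $R[X]\in v$). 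So all the content sits in the reverse implications.

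For (ii) I would unwind the definition to a membership condition: $R^\ue(\pi_s,u)$ holds iff $\{t:R(s,t)\}\in u$, and $P^\ue(\pi_s,u)$ iff $\{t:P(s,t)\}\in u$ (and symmetrically, via in-neighbourhoods, for $R^\ue(u,\pi_s)$). Now split on whether $s\in R_\infty$. If $s\notin R_\infty$, then $\{t:R(s,t)\}$ is finite, hence not in the non-principal $u$, so $R^\ue(\pi_s,u)$ fails; and $\{t:P(s,t)\}\subseteq R_\infty$ is finite as well, so $P^\ue(\pi_s,u)$ fails too, and both sides are false. If $s\in R_\infty$, then every edge out of $s$ lies in $P$, so $P(s,\cdot)=R(s,\cdot)$ and $S(s,\cdot)=\emptyset$, whence the two membership conditions literally coincide. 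This settles (ii).

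The heart of the argument is the reverse direction of (iii). Assume $R^\ue(u,v)$ with $u,v$ non-principal and fix $Y\in u$; I want $S[Y]\in v$. Since $R_\infty$ is finite and $u$ is non-principal, $A\setminus R_\infty\in u$, so $X:=Y\setminus R_\infty\in u$ with $X\subseteq A\setminus R_\infty$. For such an $X$ every edge leaving $X$ has its source outside $R_\infty$, so $R[X]=S[X]\sqcup P[X]$ with $P[X]=R[X]\cap R_\infty$; in particular $P[X]\subseteq R_\infty$ is finite. From $R^\ue(u,v)$ we have $R[X]\in v$, and since $v$ is non-principal $R_\infty\notin v$, hence $S[X]=R[X]\cap(A\setminus R_\infty)\in v$. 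As $S[X]\subseteq S[Y]$, upward closure gives $S[Y]\in v$, so $S^\ue(u,v)$.

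Finally, the ultrapower clause. Here the analogue of ``principal'' is the diagonal image and the analogue of ``non-principal'' is a class $[a]_\cD$ not equal to any $[\mathrm{const}_s]_\cD$. The point to check first is that $R_\infty^*$ consists of exactly the finitely many diagonal images of $R_\infty$: if $\{i:a_i\in R_\infty\}\in\cD$, then partitioning this set according to the finitely many values in $R_\infty$ puts one block in $\cD$, so $[a]_\cD$ is diagonal. With this in hand the three parts run as above, now reading $R^*([a]_\cD,[b]_\cD)$ as $\{i:R(a_i,b_i)\}\in\cD$ and using that for a finite $F$ and non-diagonal $[b]_\cD$ one has $\{i:b_i\in F\}\notin\cD$, together with the finiteness of $R_\infty^*$. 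I expect the identification of the correct ultrapower analogue of non-principality --- non-diagonal, rather than merely ``avoiding $R_\infty$'' --- to be the one genuinely delicate point, since a diagonal image of a finite-degree out-neighbour of $s$ avoids $R_\infty$ yet would break (ii); everything else is the same finiteness bookkeeping as in the ultrafilter case.
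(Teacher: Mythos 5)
Your proof is correct and takes essentially the same route as the paper's: both arguments rest on the finiteness of $R_\infty$ together with the fact that non-principal ultrafilters contain no finite sets, your image-side computation in (iii) being the mirror image of the paper's preimage-side one ($X' = X\setminus R_\infty$), and your degree argument in (ii) making explicit what the paper gets by citing boundedness of the $S$-reducts. The one substantive addition is that you actually verify the ultrapower clause (non-diagonal elements as the analogue of non-principal ultrafilters, via pigeonhole over the finite set $R_\infty$), which the paper's proof leaves entirely to the reader.
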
 
	\begin{proof}
		(i) is immediate, we only argue for (ii)-(iii).
		Since both  the reducts $\<A, S\>$ and $\<\Uf(A), S^\ue\>$ are bounded, we must have $\<\pi_s,u\>\not \in  S^\ue$ and $\<u, \pi_s\>\not \in  S^\ue$,  for all $u\in \Uf^*(A)$. Thus, whenever $R^\ue(\pi_s,u)$ (or $R^\ue(u,\pi_s)$), we must have $s\in R_\infty$. On the other hand $P^\ue \subseteq R^\ue$ by the monotonicity of $(\cdot)^\ue$. This proves (ii).\\
		
		\noindent	For (iii) fix $u,v\in \Uf^*(A)$. As $R_\infty$ is finite we have $\<u,v\>\not\in P^\ue$, and $S^\ue \subseteq R^\ue$ by monotonicity.  Assume that $R^\ue(u,v)$.  By Observation \ref{obs_degree} (i), $u$ must have a finite degree, hence $ S^\ue(u,v)$. In more details: suppose $X\in v$, then $X' =\{w\in X: w\not\in R_\infty\}\in v$. Since $\<S\>(X')=\<R\>(X')\in u$ by assumption, and  $\<R\>(X')= \<S\>(X)$. Hence we obtain $S^\ue(u,v)$, since $X$ was  arbitrary. This proves (iii).
	\end{proof}
	
	\noindent	This  implies that $S^\ue \cup P^\ue = (S\cup P)^\ue = R^\ue$ and $S^* \cup P^* = (S\cup P)^* = R^*$. We introduce the translation $(\cdot)^\sharp : L_{\{R, d_w\}_{w\in R_\infty}}\to L_{\{S,P,d_w\}_{w\in R_\infty}}$ between the  languages:
	\begin{align}
		\begin{split}
			(\tau_1=\tau_2)^\sharp := \tau_1 = \tau_2; \quad
			R(\tau_1,\tau_2)^\sharp& := S(\tau_1,\tau_2) \vee P(\tau_1,\tau_2); \\
			(\varphi \vee \psi)^\sharp := (\varphi)^\sharp\vee (\psi)^\sharp;\quad
			(\neg \varphi)^\sharp& :=\neg (\varphi)^\sharp;\quad
			(\exists x \varphi)^\sharp :=\exists x (\varphi)^\sharp
		\end{split}
	\end{align}
	\begin{proposition}\label{lem_transfer}
		Let $\cA$ be an almost bounded structure, $\cA^* $ be any of its ultrapower.  For all formulas $\varphi(\overline{x})$ of $L_{\{R,d_w\}_{w\in R_\infty}}$ and $[\overline{a}]_\cD\in A^*$, $\overline{u}\in \Uf(A)$:
		
		\begin{enumerate}
			\item[(i)] $\cA^*\models \varphi([\overline{a}]_\cD)$ iff $\cA^\sharp\models \varphi([\overline{a}]_\cD)^\sharp,$
			\item[(ii)]  $\cA^\ue\models \varphi(\overline{u})$ iff $\cA^\flat\models \varphi(\overline{u})^\sharp.$
		\end{enumerate}

	\end{proposition}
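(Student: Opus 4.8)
The plan is to prove both (i) and (ii) by induction on the complexity of $\varphi$, exploiting the fact established just above that $(S\cup P)^\ue = R^\ue$ and $(S\cup P)^* = R^*$. The two parts are structurally identical, differing only in which extension (ultrapower versus ultrafilter extension) and which universe ($A^*$ versus $\Uf(A)$) one works over, so I would carry them in parallel. The key observation making the induction work is that $\cA^\sharp$ (respectively $\cA^\flat$) has the same universe and interprets the constants $d_w$ identically to $\cA^*$ (respectively $\cA^\ue$); only the relational symbol differs, with $R$ replaced by the pair $S,P$.

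For the base cases, the equality atom $\tau_1 = \tau_2$ is preserved trivially since $(\tau_1=\tau_2)^\sharp = \tau_1=\tau_2$ and the interpretations of terms coincide across the two structures (both are built from variables and the same constants $d_w$). The substantive base case is the relational atom $R(\tau_1,\tau_2)$, whose translation is $S(\tau_1,\tau_2)\vee P(\tau_1,\tau_2)$; here I would invoke precisely the displayed identity $S^*\cup P^* = R^*$ (resp.\ $S^\ue\cup P^\ue = R^\ue$), which says that the interpretation of $R$ in the target structure is the union of the interpretations of $S$ and $P$. Thus $\cA^*\models R(\tau_1,\tau_2)$ iff the interpreted pair lies in $R^* = S^*\cup P^*$ iff $\cA^\sharp\models S(\tau_1,\tau_2)\vee P(\tau_1,\tau_2)$, as desired.

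The inductive steps for the Boolean connectives and the quantifier are routine and follow immediately from the recursive definition of $(\cdot)^\sharp$: for negation and disjunction one uses the induction hypothesis together with the fact that the two structures share a common universe and term interpretation, so that satisfaction commutes with $\neg$ and $\vee$; for $\exists x\,\varphi$ one uses that the quantifier ranges over the same universe in $\cA^*$ and $\cA^\sharp$ (respectively in $\cA^\ue$ and $\cA^\flat$), so a witness on one side is a witness on the other after applying the induction hypothesis to $\varphi$.

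I do not expect any genuine obstacle here; the content of the proposition is essentially bookkeeping that certifies the definitional soundness of the translation $(\cdot)^\sharp$ against the semantic decomposition. The only point requiring a moment's care is to confirm that the constants are interpreted identically in $\cA^*$ and $\cA^\sharp$ (and in $\cA^\ue$ and $\cA^\flat$) so that terms are genuinely interpretation-invariant across the reduct change; this is guaranteed by the stipulation that $\cA^\sharp = (\cA^\circ)^*$ and $\cA^\flat = (\cA^\circ)^\ue$, so the constants travel along with the extension in the same way on both sides.
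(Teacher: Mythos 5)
Your proof is correct and takes essentially the same route as the paper: the paper's own proof is a one-line appeal to a straightforward induction on formulas, where the atomic case is exactly the decomposition identity $R^\ue = S^\ue\cup P^\ue$ (resp.\ $R^* = S^*\cup P^*$) coming from Proposition~\ref{prop:iso}, which is precisely the induction you spell out. (Note the paper's proof text cites the proposition itself due to a label slip; the intended reference is the coherence result, i.e.\ Proposition~\ref{prop:iso}, which is what you use.)
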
\label{lem:ultra}
	\begin{proof} Follows from Proposition \ref{lem_transfer} by a straightforward induction.
	\end{proof}

	\begin{proposition}\label{prop_transfer}
		Let $\cA$ be an almost bounded structure, $\cA^* $ be any of its ultrapower. If $\cA^\flat\preceq \cA^\sharp$, then $\uf\preceq \cA^*$.
	\end{proposition}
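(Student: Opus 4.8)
The plan is to deduce $\uf\preceq\cA^*$ purely formally, by composing the hypothesis $\cA^\flat\preceq\cA^\sharp$ with the two transfer equivalences recorded in Proposition~\ref{lem_transfer}.

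First I would pin down the carriers and the embedding. The structures $\cA^\flat$ and $\uf$ share the universe $\Uf(A)$, while $\cA^\sharp$ and $\cA^*$ share the universe $A^*$; hence the (elementary) embedding $\Uf(A)\hookrightarrow A^*$ witnessing $\cA^\flat\preceq\cA^\sharp$ is precisely the map along which we must verify $\uf\preceq\cA^*$. Passing from $\cA^\flat$ to $\uf$ and from $\cA^\sharp$ to $\cA^*$ amounts to reading $R^\ue$ and $R^*$ off the decomposition via $R^\ue=S^\ue\cup P^\ue$ and $R^*=S^*\cup P^*$ (noted right after Proposition~\ref{prop:iso}), so no new choice of map is involved.

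Then I fix an arbitrary formula $\varphi(\overline{x})$ of $L_{\{R,d_w\}_{w\in R_\infty}}$ and a tuple $\overline{u}$ from $\Uf(A)$, and run the chain
\[
\uf\models\varphi(\overline{u})
\ \Longleftrightarrow\ \cA^\flat\models\varphi(\overline{u})^\sharp
\ \Longleftrightarrow\ \cA^\sharp\models\varphi(\overline{u})^\sharp
\ \Longleftrightarrow\ \cA^*\models\varphi(\overline{u}),
\]
where the first equivalence is Proposition~\ref{lem_transfer}(ii), the middle one is the hypothesis $\cA^\flat\preceq\cA^\sharp$ applied to the formula $\varphi^\sharp$, and the last one is Proposition~\ref{lem_transfer}(i) with the tuple $\overline{u}$ regarded as a tuple of $A^*$. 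Since $\varphi$ and $\overline{u}$ were arbitrary, this establishes $\uf\preceq\cA^*$.

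The only point that genuinely needs checking is that the middle step is legitimate: the hypothesis $\cA^\flat\preceq\cA^\sharp$ speaks about formulas of the two-relation language $L_{\{S,P,d_w\}_{w\in R_\infty}}$, and one must know that $\varphi^\sharp$ is such a formula. This is immediate from the definition of the translation $(\cdot)^\sharp$, which replaces every atom $R(\tau_1,\tau_2)$ by $S(\tau_1,\tau_2)\vee P(\tau_1,\tau_2)$ and leaves the Boolean and quantifier structure untouched. Beyond this bookkeeping the argument is a formal composition, so I do not expect any substantive obstacle.
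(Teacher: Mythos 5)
Your proposal is correct and relies on exactly the same ingredients as the paper's proof: the translation equivalences of Proposition~\ref{lem_transfer} composed with the hypothesis $\cA^\flat\preceq\cA^\sharp$, along the very embedding $\Uf(A)\hookrightarrow A^*$ witnessing that hypothesis. The only difference is presentational: the paper packages the argument through the Tarski--Vaught criterion (checking witnesses for existential formulas), whereas you verify elementarity directly by the three-step chain of equivalences, which handles atomic formulas and the identification $R^\ue=S^\ue\cup P^\ue$, $R^*=S^*\cup P^*$ uniformly.
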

	
	\begin{proof}
		We use  Tarski--Vaught criterion. Let $u_0,\dots, u_n\in \Uf(A)$ and suppose that for an $L_R$-formula $\varphi(x, x_0, \dots, x_n)$ we have  $\cA^*\models\exists x\varphi (u_0,\dots, u_n)$. Then 
		\begin{align*}
			\cA^*\models\exists x\varphi (u_0,\dots, u_n) &\Leftrightarrow \cA^\sharp \models(\exists x\varphi (u_0,\dots, u_n))^\sharp \text{ by Proposition \ref{lem_transfer}}\\
			&\Leftrightarrow \cA^\sharp\models\exists x(\varphi (u_0,\dots, u_n))^\sharp\\
			&\Rightarrow \cA^\sharp\models \varphi (u, u_0,\dots, u_n)^\sharp \text{ for some $u\in \Uf(A)$}, \text{ as } \cA^\flat\preceq \cA^\sharp\\
			&\Leftrightarrow \cA^*\models \varphi (u, u_0,\dots, u_n).
		\end{align*}
	\end{proof}

	\subsection{A \L o\'s's Lemma type property}
	We start with a couple of definitions. 
	Fix a structure $\cA=\langle A,R\rangle$, $s,t\in A$, and a sequence of elements $\overline{w}=\<w_0,\dots, w_n\>$ from $A$.  The set $R[\overline{w}]$ is called an $R$-\textit{road} from $s$ to $t$ if it is the set containing exactly one of $R(w_i,w_{i+1})$ or $R(w_{i+1}, w_i)$ for $i<n$, moreover  $w_0= s$ and $w_n=t$.  Let $u_0,\dots,u_n$ be different ultrafilters over $A$. We say that the pairwise disjoint subsets $D_{u_0} , \dots , D_{u_n} \subseteq A$ are \textit{distinguishing sets} for the ultrafilters $u_0,\dots,u_n$, if $D_{u_j}\in u_i$ exactly when $j=i$.
	
	\begin{definition}\label{def:uf_road}
		Fix $\cA=\<A,R\>$, $Q\subseteq R$ and let $\overline{w}$ be a $Q^\ue$-road of length $n$ from $u$ to $v$.  The \textit{ultrafilter road of $Q^\ue[\overline{w}]$ based on $X\in u$} is the set $\Delta(X, Q^\ue [\overline{w}])$ defined inductively by:
		\begin{align}
			\begin{split}
				\Delta_0\big(X, Q^\ue [\overline{w}]\big) &= X\\
				\Delta_{i+1}\big(X,Q^\ue [\overline{w}] \big) &= \begin{cases}
					Q\big[\Delta_i\big(X, Q^\ue [\overline{w}] \big) \big]\cap D_{w_{i+1}} & \text{if } Q^\ue(w_i,w_{i+1})\in Q^\ue[\overline{w}],\\ 	
					\\
					\big\<Q\big\>\big(\Delta_i\big(X, Q^\ue [\overline{w}] \big) \big)\cap D_{w_{i+1}}   & \text{if } Q^\ue(w_{i+1},w_{i})\in Q^\ue[\overline{w}].
				\end{cases}
			\end{split}
		\end{align}
		Finally, we let $\Delta\big(X,Q^\ue [\overline{w}]\big) := \Delta_{n}\big(X,Q^\ue [\overline{w}]\big)$. 
	\end{definition}
	
	\noindent	To illustrate this definition, consider the case when $R^\ue[w_0,w_1,w_2]$ is the road $w_0R^\ue w_1(R^\ue)^{-1}w_2$ and $X\in u$. Then $\Delta(X, R^\ue[\overline{w}])=\<R\>\big(R[X]\cap D_{w_1}\big)\cap D_{w_2}$

	\begin{proposition}\label{uf_road}
		Let $\cA=\< A,R\>$ be a structure, $Q\subseteq R$, $u,v\in \Uf(A)$ and $\overline{w}$ be a $Q^\ue$-road from $u$ to $v$. Then,  $\Delta\big(X, Q^\ue[\overline{w}]\big)\in v$.
	\end{proposition}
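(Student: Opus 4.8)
The plan is to prove the slightly stronger statement that $\Delta_i\big(X, Q^\ue[\overline{w}]\big)\in w_i$ for every $i\leq n$, by induction on $i$; the proposition is then the case $i=n$, since the road runs from $u=w_0$ to $v=w_n$. The base case is immediate: $\Delta_0\big(X, Q^\ue[\overline{w}]\big)=X$, and $X\in u=w_0$ by hypothesis, as the ultrafilter road is based on $X\in u$.

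For the inductive step, assume $\Delta_i:=\Delta_i\big(X, Q^\ue[\overline{w}]\big)\in w_i$. The key tool is the pair of equivalent descriptions of the extended relation recorded in the definition of $R^\ue$, namely $Q^\ue(a,b)$ iff $\{\langle Q\rangle(Y):Y\in b\}\subseteq a$ iff $\{Q[Y]:Y\in a\}\subseteq b$. The recursive definition of $\Delta_{i+1}$ branches on the orientation of the $(i{+}1)$-th edge, and each orientation is governed by exactly one of these two forms. If $Q^\ue(w_i,w_{i+1})$, then applying the second description with $a=w_i$, $b=w_{i+1}$ to the set $\Delta_i\in w_i$ yields $Q[\Delta_i]\in w_{i+1}$. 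If instead $Q^\ue(w_{i+1},w_i)$, then applying the first description with $a=w_{i+1}$, $b=w_i$ to the same $\Delta_i\in w_i$ yields $\langle Q\rangle(\Delta_i)\in w_{i+1}$. In both cases the ``image'' set appearing in the definition of $\Delta_{i+1}$ lies in $w_{i+1}$. Since the distinguishing set satisfies $D_{w_{i+1}}\in w_{i+1}$ and ultrafilters are closed under finite intersection, we conclude $\Delta_{i+1}\in w_{i+1}$, completing the induction.

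The only step demanding genuine care is the bookkeeping in the inductive step: one must match each of the two orientations of a road edge to the correct one of the two equivalent characterizations of $Q^\ue$, as a mismatch would send the image into the wrong ultrafilter. Once this correspondence is pinned down, the remainder is a routine ultrafilter computation, and the distinguishing sets serve no purpose beyond keeping the running set inside the intended ultrafilter $w_{i+1}$ (they become essential only later, when one wants these sets to be pairwise disjoint across the road). I therefore expect no real obstacle here beyond careful indexing.
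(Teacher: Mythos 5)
Your proof is correct and follows exactly the route the paper intends: the paper's own proof is just the one-line remark that the claim ``readily follows by induction on the length of the road,'' and your argument is precisely that induction, with the strengthened inductive hypothesis $\Delta_i\big(X,Q^\ue[\overline{w}]\big)\in w_i$ and the correct matching of each edge orientation to the appropriate characterization of $Q^\ue$. Nothing to fix.
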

	
	\begin{proof}  Readily follows by  induction on the length of the road.  
	\end{proof}
	
	Let $w\in A$, $Q\subseteq A\times A$ and $n\in\omega$. By $\<w\>_Q^n$ we denote the substructure given by the elements  reachable at most $n$ steps from $w$ along $Q$ or  $Q^{-1}$, and call it the $(Q,n)$-neighborhood of $w$. For $\cA_i = \<A_i,R_i\>$ with $w_i\in A_i$ and $Q_i\subseteq A_i\times A_i$, where $i\in \{0,1\}$ we use the notation $\<w_0\>^n_{Q_0}\cong \<w_1\>^n_{Q_1}$ to indicate the existence of an isomorphism $f: \<w_0\>^n_{Q_0} \to \<w_1\>^n_{Q_1}$ such that $f(w_0) = w_1$.

	\begin{definition}
		Let $\cA =\<A,R\>$ be an almost bounded structure and $\cA^\times, \cA^\diamond\in \{\cA, \cA^\ue, \cA^*\}$. For $w\in A^\times$, $u\in A^\diamond$ and $n\in \omega$ we say that $\<w\>_{S^\times}^n$ and $\<u\>_{S^\diamond}^n$ are \textit{$P$-isomorphic} (notation: $\<w\>_{S^\times}^n \cong_{P} \<u\>_{S^\diamond}^n$) if the following conditions hold:
		\begin{enumerate}\itemsep-2pt
			\item[•] there is an  $f:  \<w\>_{S^\times}^n \cong \<u\>_{S^\diamond}^n$,
		\end{enumerate}
		and  for all $s\in \<w\>_S^n$ and  $t\in R_\infty$ we have 
		\begin{enumerate}\itemsep-2pt
			\item[•]$P^\times(s,t)$ if and only if $ P^\diamond(f(s),t)$,
			\item[•]  $P^\times(t,s)$ if and only if $ P^\diamond(t,f(s))$.
		\end{enumerate}
	\end{definition}
	

	In \cite{molnar}, Theorem 4.9   proves a local variant of \L o\'s's Lemma  for bounded structures  regarding the neighborhoods of each element:
	\begin{fact}\label{thm2}
		Let $\cA=\<A, R\>$ be a bounded structure. Then for all $n\in \omega$ and $u\in \Uf(A)$ 
		\[\{w\in A: \exists f_{w,n}: \langle u\rangle^n_{R^\ue} \cong \langle w\rangle^n_{R} \}\in u, \]
		moreover, $f_{w,n}(v_i)\in D_{v_i}$, for all  $v_i\in \<u\>^n_{R^\ue}$. 
		
	\end{fact}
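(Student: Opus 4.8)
The plan is to argue by induction on $n$, reducing at each stage to a finite list of ``local'' conditions that hold for $v$-almost all points (for the relevant ultrafilters $v$ appearing in the neighbourhood) and then transporting these back to a $u$-large set of witnesses $w$. Throughout I use that, since $\cA$ is bounded with some finite bound $k$ on in- and out-degree, Observation \ref{obs_degree}(i) (applied with $R=S$, as $R_\infty=\emptyset$) makes $\cA^\ue$ bounded by the same $k$; hence $\langle u\rangle^n_{R^\ue}$ is a \emph{finite} structure, with vertices $v_0=u,v_1,\dots,v_m$, and there are only finitely many isomorphism types of radius-$n$ pointed $R$-neighbourhoods in $\cA$. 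Fixing pairwise disjoint distinguishing sets $D_{v_0},\dots,D_{v_m}$ with $D_{v_i}\in v_i$, the theorem reduces to showing that the $u$-generic radius-$n$ type equals that of $\langle u\rangle^n_{R^\ue}$, together with the placement $f_{w,n}(v_i)\in D_{v_i}$.

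The base case $n=0$ is immediate from Observation \ref{obs_degree}(ii): the presence or absence of a loop at $u$ is a $u$-decidable property, so $\{w:\langle w\rangle^0_R\cong\langle u\rangle^0_{R^\ue}\}\cap D_u\in u$. For the step from $n$ to $n+1$ I take a boundary vertex $v$ of $\langle u\rangle^n_{R^\ue}$ (already matched, by induction, to $f_{w,n}(v)\in D_v$ for $u$-almost all $w$) and analyse its new $R^\ue$-neighbours $v'$ through three facts about points $p\in D_v$: \textbf{(existence)} if $R^\ue(v,v')$ then $\langle R\rangle(D_{v'})\in v$ (resp. $R[D_{v'}]\in v$), so $v$-almost every $p$ has a successor (resp. predecessor) in $D_{v'}$; \textbf{(no strays)} any selection function $g$ choosing, for $p$ in a $v$-set, a successor of $p$ pushes $v$ to an ultrafilter $g_*v$ with $R^\ue(v,g_*v)$ (indeed $X\cap\operatorname{dom}g\subseteq g^{-1}(R[X])$ for $X\in v$), hence lands among the finitely many known neighbours, so no successor escapes $\bigcup_{v'}D_{v'}$ and no non-edge of the extension becomes an edge; and \textbf{(no doubling)} $v$-almost every $p$ has \emph{exactly one} successor in each $D_{v'}$. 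Granting these, the next shell of $\langle p\rangle^{n+1}_R$ is, $v$-generically, in canonical bijection with the next shell of $\langle v\rangle^{n+1}_{R^\ue}$, correctly distributed among the $D$-sets; transporting each such $v$-large condition back along a fixed road from $u$ to $v$ (using Proposition \ref{uf_road} and the fact that $w\mapsto f_{w,n}(v)$ pushes $u$ to $v$) turns it into a $u$-large condition, and intersecting over the finitely many boundary vertices and edges yields the desired $u$-large set on which $f_{w,n}$ extends to the required isomorphism $f_{w,n+1}$.

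The main obstacle is \textbf{(no doubling)}, equivalently the assertion that the out-degree of $v$ in $\cA^\ue$ equals its $v$-generic out-degree in $\cA$; this is exactly where boundedness is used in an essential way. Two successors $h_1(p),h_2(p)\in D_{v'}$ with $h_1\ne_v h_2$ would give $(h_1)_*v=(h_2)_*v=v'$ (both are $R^\ue$-successors of $v$ containing the distinguishing set $D_{v'}$, so both equal $v'$), so it suffices to prove that \emph{finite-to-one} selections with a common pushforward agree $v$-almost everywhere. Here finiteness of fibres is forced by the degree bound, since $h_i^{-1}(t)\subseteq\langle R\rangle(\{t\})$ has at most $k$ elements. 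I would prove this by pushing $v$ forward under $\Phi=(h_1,h_2)$ to an ultrafilter $V$ on $A\times A$ whose two marginals are $v'$; since the fibres of the first projection have size $\le k$, a $k$-colouring of each fibre by rank concentrates $V$ on the graph of a single section $\rho$, whence the second marginal gives $\rho_*v'=v'$; by the classical fact that a non-principal ultrafilter is moved by every fixed-point-free selfmap (the three-colouring lemma), $\rho=_{v'}\mathrm{id}$, and pulling back yields $h_1=_v h_2$. The remaining work — verifying that edges internal to a shell are preserved, that the road transport is measure-compatible, and that the locally defined bijections glue into a single isomorphism — is routine bookkeeping of the same flavour, handled uniformly because only finitely many local conditions occur.
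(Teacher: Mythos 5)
Your proposal is correct, but a caveat on the comparison: the paper does not prove Fact \ref{thm2} at all --- it is imported as a black box from \cite{molnar} (Theorem 4.9), and what the paper does print is the generalization of that proof method to the almost bounded case (Lemma \ref{thm}). So the honest comparison is with that method. The route visible in the paper runs through the ultrafilter-road machinery: distinguishing sets, roads $Q^\ue[\overline{w}]$, and the transport Proposition \ref{uf_road}, used contrapositively (if the desired set of witnesses were not in $u$, push its complement along a road from $u$ to $v_i$ and contradict a relation instance at the far end). Your argument is genuinely different in its key combinatorial ingredient: you work with pushforwards of selection functions and isolate the lemma that $\leq k$-to-one maps with a common pushforward agree almost everywhere, proved via the section $\rho$ extracted by colouring the fibres and Kat\v{e}tov's three-set lemma. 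Your (existence), (no strays) and (no doubling) facts are all correct, and the one step you assert without proof --- that $w\mapsto f_{w,n}(v)$ pushes $u$ forward to $v$ --- does follow from your own machinery, by induction along a road from $u$ to $v$: each successive pushforward is an $R^\ue$-successor (resp.\ predecessor) of the previous one containing $D_{v_{i+1}}$, and since such successors stay inside the finite set $\<u\>^{n+1}_{R^\ue}$, in which $D_{v_{i+1}}$ singles out $v_{i+1}$, it equals $v_{i+1}$. The same agreement lemma also disposes of the gluing issues you call routine (e.g.\ a new vertex adjacent to two old boundary vertices receives a well-defined image, because the two candidate maps are finite-to-one with the same pushforward). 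As for what each approach buys: yours is self-contained and the agreement lemma is a clean, reusable statement; the paper's road transport is the formulation that survives the passage to almost bounded structures, since in Lemma \ref{thm} one must localize along specific $S^\ue$-roads to control the interaction with the finitely many infinite-degree elements, and it is not obvious how a global colouring argument would relativize to that setting.
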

	
	\noindent We generalize this property to almost bounded structures.

	\begin{lemma}\label{thm}
		Let $\cA=\<A,R\>$ be an almost bounded structure. Then for all $n\in \omega$ and $u\in \Uf(A)$ we have
		\[V_{n} = \{w \in A: \exists f_{w,n}: \<u\>^n_{S^\ue}\cong_{P}\<w\>^n_{S}\}\in u,\]
		moreover, $f_{w,n}(v_i)\in D_{v_i}$, for all  $v_i\in \<u\>^n_{S^\ue}$. 
	\end{lemma}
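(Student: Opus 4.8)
The plan is to reduce the statement to the bounded case already recorded in Fact~\ref{thm2}, applied not to $R$ but to its reduct $S$, and then to repair the finitely many $P$-connections to $R_\infty$ by a judicious choice of distinguishing sets. First I would invoke Observation~\ref{obs_degree}(i): the reduct $\langle A, S\rangle$ is bounded, and so is $\langle \Uf(A), S^\ue\rangle$, so in particular the neighborhood $\langle u\rangle^n_{S^\ue}$ is finite; list its elements as $v_0 = u, v_1, \dots, v_m$. Fact~\ref{thm2} applied to $\langle A, S\rangle$ then yields
\[
W_n = \{w \in A : \exists f_{w,n}\colon \langle u\rangle^n_{S^\ue} \cong \langle w\rangle^n_S \text{ with } f_{w,n}(v_i) \in D_{v_i}\} \in u,
\]
which already delivers the $S$-isomorphism and the ``moreover'' clause. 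What remains is to upgrade the plain isomorphism $\cong$ to the $P$-isomorphism $\cong_P$, i.e.\ to match the $P$-edges between the $v_i$ and the principal ultrafilters $\pi_t$ of the nodes $t \in R_\infty$.

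The crucial observation is that $R_\infty$ is finite and that each such $P^\ue$-edge is detected by a single subset of $A$: unwinding the definition of $R^\ue$ one checks, for $t \in R_\infty$,
\[
P^\ue(v_i, \pi_t) \iff \langle P\rangle(\{t\}) \in v_i, \qquad P^\ue(\pi_t, v_i) \iff P[\{t\}] \in v_i.
\]
Since distinguishing sets may be chosen freely (any pairwise disjoint family with $D_{v_j}\in v_i$ exactly when $j=i$ is admissible), I would fix them $P$-homogeneously from the outset. For each $i \le m$ and each of the finitely many $t \in R_\infty$, the membership of $\langle P\rangle(\{t\})$, resp.\ $P[\{t\}]$, in $v_i$ is decided, so I replace $D_{v_i}$ by its intersection with that set or with its complement. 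Each such intersection keeps $D_{v_i} \in v_i$ and preserves pairwise disjointness, so the refined family is still a legitimate choice; after the finitely many refinements every $x \in D_{v_i}$ satisfies $P(x,t) \Leftrightarrow P^\ue(v_i, \pi_t)$ and $P(t,x) \Leftrightarrow P^\ue(\pi_t, v_i)$ for all $t \in R_\infty$.

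Finally I would close the argument: fix $w \in W_n$ with witness $f_{w,n}$. Because $f_{w,n}(v_i) \in D_{v_i}$ and $D_{v_i}$ is $P$-homogeneous, the displayed equivalences give $P^\ue(v_i, \pi_t) \Leftrightarrow P(f_{w,n}(v_i), t)$ and $P^\ue(\pi_t, v_i) \Leftrightarrow P(t, f_{w,n}(v_i))$ for every $t \in R_\infty$ (this covers the base point too, since $v_0 = u$). Hence $f_{w,n}$ is already a $P$-isomorphism, so $w \in V_n$; thus $W_n \subseteq V_n$, whence $V_n \in u$, and the ``moreover'' clause is inherited directly from Fact~\ref{thm2}. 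I expect the genuine content to lie in the refinement step together with the verification of the two $P^\ue$-membership equivalences above; once the distinguishing sets are engineered to be $P$-homogeneous, everything else is simply the bounded case for the $S$-reduct.
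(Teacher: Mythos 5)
Your proof is correct, but it reaches the conclusion by a genuinely different route than the paper. The paper also begins by applying Fact~\ref{thm2} to the bounded reduct $\<A,S\>$, but it then repairs the $P$-connections \emph{a posteriori}: having fixed witnesses $f_{w,n}$ for $w\in U_n$, it shows, for each of the finitely many pairs $(s,v_i)$ with $s\in R_\infty$ and each of the four edge/non-edge configurations, that the correction set $U^{v_i}_{\pi_s}=\{w\in U_n : P(s,f_{w,n}(v_i))\}$ lies in $u$, arguing by contradiction via the ultrafilter-road machinery of Proposition~\ref{uf_road}, and finally intersects these finitely many sets. You instead condition the distinguishing sets \emph{a priori}: your equivalences $P^\ue(v_i,\pi_t)\Leftrightarrow \<P\>(\{t\})\in v_i$ and $P^\ue(\pi_t,v_i)\Leftrightarrow P[\{t\}]\in v_i$ are correct (and are essentially the same fact the paper exploits inside its contradiction, where $P^\ue(\pi_s,v_i)$ forces $s\in\<P\>(X)$ for every $X\in v_i$), and refining each $D_{v_i}$ by these finitely many sets or their complements makes every witness supplied by Fact~\ref{thm2} automatically a $P$-isomorphism. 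This buys a shorter argument that avoids Proposition~\ref{uf_road} in this lemma entirely and isolates all the ultrafilter-theoretic content in the choice of distinguishing sets. The one thing your argument leans on is that Fact~\ref{thm2} holds for an \emph{arbitrarily prescribed} distinguishing family --- in particular for your refinement $D'_{v_i}$ --- rather than merely for some family arising from the bounded-case proof; you should say explicitly that each $D'_{v_i}$ is again a legitimate distinguishing set because every set you intersect with is decided by $v_i$ and refinement preserves disjointness and the membership pattern. This universal reading is the natural one given the paper's definition, and the paper's own proof tacitly requires at least as much (its step asserting that every $t\in\Delta(A\setminus U^{v_i}_{\pi_s},S^\ue[\overline{w}])$ has the form $f_{w,n}(v_i)$ presupposes an even tighter interaction between the $D_{v_i}$ and the witnesses), so this reliance is not a defect peculiar to your proof.
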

	\begin{proof} 
		Instead of $\cA$, we consider   $\cA^\circ =\< A,S,P, d_w\>_{w\in R_\infty}$. Since the reduct $\<A, S\>$  is   bounded,  by Fact \ref{thm2} we obtain:
		\begin{align}
			U_n = \{w\in A: \exists f_{w,n}:\<u\>^n_{S^\ue}\cong\<w\>^n_{S}\}\in u,\label{U_n}
		\end{align}	
		\noindent with $f_{w,n}(v_i)\in D_{v_i}$, where the $D_{v_i}$'s are the distinguishing sets for the members from $v_i\in \<u\>^n_{S^\ue}$. Throughout we work with a fixed function $f_{w,n}$, for each $w\in U_n$.  Suppose,  for some $v_i\in \<u\>^n_{S^\ue}$ we have $ R^\ue( \pi_s,
		v_i)$. By Proposition \ref{prop:iso}  $s\in R_\infty$. We claim that
		\begin{align}
			U_{\pi_s}^{v_i}  =\{w\in U_n : P(s,f_{w,n}(v_i)) \}\in u. 
		\end{align}
		Otherwise $A\setminus 	U_{\pi_s}^{v_i}\in u$.
		Fix some road $S^\ue[\overline{w}]$ from $u$ to $v_i$. Then 
		\begin{align}
			\Delta(A\setminus 		U_{\pi_s}^{v_i}  , S^\ue[\overline{w}] )\in v_i,
		\end{align}
		by Proposition \ref{uf_road}. By (\ref{U_n}) each $t\in \Delta(A\setminus 		U_{\pi_s}^{v_i}  , S^\ue[\overline{w}] )$ is of the form $f_{w,n}(v_i)$, for some $w\in A\setminus 	U_{\pi_s}^{v_i}$, whence
		\begin{align}
			s\not\in \<P\>\big(\Delta(A\setminus 	U_{\pi_s}^{v_i} , S^\ue[\overline{w}])\big).
		\end{align}
		This contradicts to the fact that $ P^\ue(\pi_s, v_i)$, hence contradicting to the assumption that $ R^\ue(\pi_s, v_i)$. Similar argument can be conducted if  $\<\pi_s,v_i\>\not \in R^\ue$ or $\<v_i, \pi_s\>\in R^\ue$ or $\<v_i,\pi_s\>\not\in R^\ue$.  For each $s\in R_\infty$ and $v_i\in \<u\>^n_{S^\ue}$ we may repeat the whole process depending on how the possible pairs are related to each other  in order to obtain: 
		\begin{align}
			W_n := \bigcap_{v_i\in \<u\>^n_{S^\ue}}\bigcap_{s\in R_\infty}U_{\pi_s}^{v_i} \in u.
		\end{align}
		Finally,  $W_{n} \subseteq V_n\in u$, as desired.
	\end{proof}

	\subsection{Comparing the constructions}
	Fix an almost bounded structure $\cA$ and $\<u\>^n_{S^\ue}$, for some $u\in \Uf(A)$ and $n\in \omega$. For each $v_i,v_j\in \< u\>_{S^\ue}^n$  consider distinct variables $x_i,x_j$. We reserve  $x_0$ for $u$ and assume  $|\< u\>_{S^\ue}^n| = k$.  Define   $\Sigma$ to be the set of formulas:
	\begin{align}
		\begin{split}
			\{S (x_i,x_j)  :\<v_i, v_j \>\in  S^\ue \}&\cup  \{\neg S (x_i,x_j): \<v_i, v_j \>\not \in  S^\ue\} \\
			\left\{ P (d_w, x_i) : \begin{array}{l}
				\<\pi_w, a_i \> \in  P^\ue, \text{ and } \\ w\in R_\infty\end{array} \right\} &\cup \left\{\neg P (d_w, x_i) : \begin{array}{l} \<\pi_w, a_i \> \not\in  P^\ue,  \text{ and } \\ w\in R_\infty\end{array}\right\}\\
			\left\{ P (x_i, d_w):\begin{array}{l} \<v_i,\pi_w \> \in  P^\ue, \text{ and }\\ w\in R_\infty\end{array} \right\} &\cup \left\{\neg P(x_i,  d_w) : \begin{array}{l} \<v_i,\pi_w \> \not\in  P^\ue,  \text{ and } \\ w\in R_\infty\end{array}\right\}\\
			\{\bigwedge_{i<k} d_w\neq x_i : w\in R_\infty \}&\cup 	
			\{\forall y\big( \bigvee_{m\leq n} \psi_m (x_0,y)\to \bigvee_{i<k} y= x_i \big)\} \\ &\cup
			\{\bigwedge_{i\neq j<k} x_i\neq x_j \},	
		\end{split}
	\end{align}
	where $\psi_n(x,y)$ is the formula stating the existence of an $n$-road along  $S$,  defined  as
	\begin{align}
		\begin{split}
			\psi_0(y_0,y_n) &:= y_0 = y_n,\\
			\psi_{n}(y_0,y_n)&:= \exists y_1\dots \exists y_{n-1}\big(\bigwedge_{0\leq  i\neq j\leq n}y_i\neq y_j  \wedge \bigwedge_{i<n} \big(S (y_i, y_{i+1}) \vee S(y_{i+1},y_i)\big)\big).
		\end{split}
	\end{align}
	To use less indices,  let $x:= x_0$ and   $\chi_n(x) := \exists x_1\dots \exists x_k\bigwedge\Sigma$. 
	Define the type
	\begin{align}
		\tp(u) =\{\chi_n(x): n\in \omega \}.
	\end{align}
	\noindent Similarly, we will consider the type $\tp([a]_\cD)$  in any ultrapower. The next lemma contains some basic properties regarding regular ultrapowers, however we give the  sketch of the arguments.

	\begin{lemma}\label{lemma:26}

		Let $\cA$ be an almost bounded structure, fix an ultrapower $\cA^*$ over an ultrafilter, which  is at least $\kappa$-regular for some infinite $\kappa$.  Then: 
		\begin{enumerate}\itemsep-1pt
			
			\item[(i)] If some $[a]_\cD\in A^*$ realizes $\tp(u)$  for  $u\in \Uf(A)$, then $\<[a]_\cD\>_{S^*}^\omega\cong_{P} \< u\>_{S^\ue}^\omega $.
			\item[(ii)]\textit{For each non-diagonal element $[a]_\cD\in A^*$ there are at least $2^{2^{\aleph_0}}$-many elements $u_\alpha\in \Uf(A)$ for which
				$  \< u_\alpha\>^\omega_{S^\ue}  \cong_{P} \langle [a]_D\rangle^\omega_{S^*}$, and $\<[u_\alpha]_\cD\>^\omega_{S^*}\neq \<[u_\beta]_\cD\>^\omega_{S^*}$, for $\alpha\neq \beta<2^{2^{\aleph_0}}$.}

			\item[(iii)] For each non-diagonal element $[a]_\cD\in A^*$, there are at least $2^\kappa$-many elements $[b_\alpha]_\cD\in  A^*$, for which $\<[b_\alpha]\>^\omega_{S^*}\cong_P \<[a]\>^\omega_{S^*}$, and $\<[b_\alpha]_\cD\>^\omega_{S^*}\neq \<[b_\beta]_\cD\>^\omega_{S^*}$, for $\alpha\neq \beta<2^\kappa$.
		\end{enumerate}
	\end{lemma}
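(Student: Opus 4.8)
The plan is to treat the three parts with a shared toolkit: Fact~\ref{thm2} and Lemma~\ref{thm} to compare $S$-neighborhoods across $\cA$, $\cA^\ue$ and $\cA^*$, \L o\'s's theorem to read off finite neighborhood types, and K\"onig's lemma to pass from all finite levels to level $\omega$ --- legitimate because $\langle A,S\rangle$ is bounded, so every $\langle\,\cdot\,\rangle^n_S$ is finite. Write $T$ for the $\cong_P$-type of $\langle[a]_\cD\rangle^\omega_{S^*}$ and set $A_T=\{w\in A:\langle w\rangle^\omega_S\cong_P T\}$. The key observation is that each $\chi_n$ pins down the level-$n$ neighborhood of its centre exactly: its witnesses realise the prescribed $S$- and $P$-literals of $\Sigma$, while the closure clause $\forall y(\bigvee_{m\le n}\psi_m(x,y)\to\bigvee_{i<k}y=x_i)$ rules out any further $S$-neighbour within $n$ steps.

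For (i), suppose $[a]_\cD$ realises $\tp(u)$. For each $n$ the formula $\chi_n$ then supplies, by the observation above, a $P$-isomorphism $\langle u\rangle^n_{S^\ue}\to\langle[a]_\cD\rangle^n_{S^*}$ sending $u$ to $[a]_\cD$. Since such an isomorphism preserves $S$-distance from the centre, its restriction to the $n$-ball is again a level-$n$ $P$-isomorphism; thus these maps form a tree under restriction, with finite levels and arbitrarily long branches, and K\"onig's lemma yields a coherent chain whose union is the desired $\langle u\rangle^\omega_{S^\ue}\cong_P\langle[a]_\cD\rangle^\omega_{S^*}$.

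For (ii), I would first check that $A_T$ is infinite. Otherwise $\{i:a(i)\in A_T\}\in\cD$ splits into finitely many fibres $\{i:a(i)=c\}$, one of which lies in $\cD$, making $[a]_\cD$ diagonal --- contrary to hypothesis. As $A_T$ is infinite there are $\ge 2^{2^{\aleph_0}}$ ultrafilters on $A_T$, each extending to some $u\in\Uf(A)$ with $A_T\in u$; for any such $u$, Lemma~\ref{thm} gives at every level $n$ some $w\in A_T$ with $\langle u\rangle^n_{S^\ue}\cong_P\langle w\rangle^n_S$, and since every such $w$ has $\langle w\rangle^\omega_S\cong_P T$, K\"onig assembles these into $\langle u\rangle^\omega_{S^\ue}\cong_P T$ as in (i). Finally, each $S^\ue$-neighborhood is countable, so ``having the same $\omega$-neighborhood'' is an equivalence relation with classes of size $\le\aleph_0$; partitioning the $2^{2^{\aleph_0}}$ ultrafilters accordingly produces $2^{2^{\aleph_0}}$ classes, and one representative per class gives a family with pairwise disjoint, hence distinct, neighborhoods.

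Part (iii) is where I expect the real work, and it is here that $\kappa$-regularity enters. Fix a regularizing family $\{X_\alpha:\alpha<\kappa\}\subseteq\cD$ in which each $i\in I$ meets only finitely many $X_\alpha$, together with an injection $\eta\mapsto c_\eta$ of $2^{<\omega}$ into the infinite set $A_T$. For $s\in 2^\kappa$ define $b_s\in{}^IA$ by $b_s(i)=c_\eta$, where $\eta$ is the finite binary string recording the values $s(\alpha)$ over the finitely many $\alpha$ with $i\in X_\alpha$. Then $b_s(i)\in A_T$ for every $i$, so $[b_s]_\cD$ shares all finite neighborhood types with the elements of $A_T$ and hence $\langle[b_s]_\cD\rangle^\omega_{S^*}\cong_P T$ by \L o\'s's theorem and the assembly of (i); and if $s(\alpha)\ne s'(\alpha)$ then $X_\alpha\in\cD$ witnesses $b_s\ne_\cD b_{s'}$, so the $[b_s]_\cD$ are $2^\kappa$ distinct elements. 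The same countability-of-neighborhoods partition as in (ii) then thins them to $2^\kappa$ elements with pairwise disjoint neighborhoods. The main obstacle is precisely this coding step: one must verify that it is well defined $\cD$-almost everywhere and that it separates distinct $s$, which is exactly the combinatorial content that $\kappa$-regularity provides.
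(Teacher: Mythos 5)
Your part (i) is correct, and your route is a genuinely clean alternative to the paper's: where you assemble the finite-level $P$-isomorphisms supplied by the formulas $\chi_n$ into a level-$\omega$ isomorphism via K\"onig's lemma (legitimate, since boundedness of $\<A,S\>$ makes every finite ball finite, and an isomorphism of $(n+1)$-balls fixing the centres restricts to one of $n$-balls), the paper instead builds explicit choice functions $a_z\in{}^IA$ for each $z\in\<u\>^\omega_{S^\ue}$, using an $\aleph_1$-incompleteness chain to stratify the index set. Your argument for (i) needs no regularity hypothesis at all, which is a small bonus.

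Parts (ii) and (iii), however, have a genuine gap: both rest on the set $A_T=\{w\in A:\<w\>^\omega_S\cong_P T\}$ being infinite, and your justification presumes $\{i: a(i)\in A_T\}\in\cD$. This is exactly the step \L o\'s's theorem does not give you. Each $\chi_n$ yields only that $\cD$-almost every $a(i)$ matches $T$ up to level $n$; the full $\omega$-type is an infinite conjunction of the $\chi_n$, and since a $\kappa$-regular $\cD$ is $\aleph_1$-incomplete, the intersection of these countably many $\cD$-sets need not lie in $\cD$ --- it can be empty. Concretely, take $\cA=\<\omega,R\>$ with $R$ the successor relation: this is bounded (so almost bounded, with $S=R$, $P=\emptyset$). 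Every non-diagonal $[a]_\cD$ is non-standard, so $\<[a]_\cD\>^\omega_{S^*}$ is a two-sided $\mathbb{Z}$-chain, while every $w\in\omega$ has $\<w\>^\omega_S$ equal to the $\omega$-chain with endpoint $0$; hence $A_T=\emptyset$, your fibre argument never starts, and the coding in (iii) has no infinite set to inject $2^{<\omega}$ into. The conclusions are nonetheless true: the paper avoids the problem by never asking for elements realizing the full $\omega$-type, working instead with the finite-level approximation sets $\widehat{\gamma(n)}=\{a(i): i\in\gamma(n)\}$ in (ii) (these are infinite by your own fibre argument, applied level by level) and with countable sets $B_i\subseteq A_{\max(i)}$ of level-$\max(i)$ matches in (iii). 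Ultrafilters concentrating on such increasingly good approximations, respectively elements of $\prod_{i\in I}B_i/\cD$, then do have $\omega$-neighborhoods $\cong_P T$, by Lemma \ref{thm} together with exactly your K\"onig assembly from (i). Your $\kappa$-regularity coding and your countability-of-neighborhoods thinning (a point the paper leaves implicit) are both sound and can be kept verbatim once $A_T$ is replaced by these level-dependent sets.
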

	\begin{proof} (i) Fix some non-diagonal $[a]_\cD$ realizing $\tp(u)$. In this case $u$ realizes $\tp([a]_\cD)$. For each $n\in \omega$ and $\chi_n\in \tp([a]_\cD)$ by \L o\'s's Lemma  we get 
		\begin{align}\label{gamma}
			\gamma(n) =\{i\in I: \cA^\circ\models\chi_n(a(i))\}\in \cD.
		\end{align}
		Since $\cD$ is $\aleph_1$-incomplete there is a sequence $\< I'_n : n\in \omega\>$ such that $I'_n\supseteq I'_{n+1}$ with $I'_n\in \cD$ and $\bigcap_{n<\omega}I'_n =\emptyset$.  Consider the decreasing sequence
		\begin{align}\label{I_n}
			I_0 = I'_0; \qquad I_{n+1}= I'_n\cap \gamma(n).
		\end{align}
		
		Then,  for all $i\in I$, there is a $\max(i)$ such that $i\in I_{\max(i)}\setminus I_{\max(i)+1}$. 	For each $n\in \omega$ we let $J_n = \{i\in I: \max(i)= n\}$.  Since for every $j\in J_n$ the set $\<a(j)\>^{n}_{S}$ is finite, say has $n_j\in \omega$ elements, we can pick choice functions $a_{n_1}, \dots, a_{n_{j-1}}: J_{n} \to A$ such that for each $j\in J_n$ we have $\<a(j)\>^n_S=\{a(j), a_{n_1}(j),\dots, a_{n_{j-1}}(j)\}$ and there is a map 
		\begin{align}
			f_{n,j} : \<u\>^{n}_{S^\ue}\cong_P \<a(j)\>_S^n. 
		\end{align}
		Pick $z\in \< u\>^\omega_{S^\ue}$, we  define  the corresponding choice function $a_z: I\to A$ as follows:
		
		\begin{align}
			a_z(i)=
			\begin{cases}
				f_{\max(i),i}(z) &\text{ if }   z\in \<v\>^{\max(i)}_
				{S^\ue} \\
				b & \text{ otherwise}, 
			\end{cases}
		\end{align}
		where $b$ is arbitrary, but fixed. Then the map $\eta:\<[a]_\cD\>^\omega_{S^*} \to \<u\>^\omega_{S^\ue}$ defined by $\eta([a_z]_\cD)= z$ satisfies $\<[a]_\cD\>^\omega_{S^*} \cong_P \<u\>^\omega_{S^\ue}$.	\\
		
		\noindent (ii) Let $[a]_\cD$ be non-diagonal.
		For each $n\in \omega$ let $\widehat{\gamma(n)} = \{a(i): i\in \gamma(n)\}$, where $\gamma(n)$ is from (\ref{gamma}).  Since $[a]_\cD$ is non-diagonal, each $\widehat{\gamma(n)}$ is infinite. Therefore, we can
		consider an injective  choice function $f:\omega\to \bigcup_{n\in\omega }\widehat{\gamma(n)}$, that is $f(n)\in \widehat{\gamma(n)}$, and for all $n\neq m$ we have $f(n)\neq f(m)$. Hence $Y=\{f(n): n\in \omega\}$ is infinite, thus there are  $2^{2^{\aleph_0}}$ ultrafilters over $Y$. For each $v^*\in  \Uf(Y)$,  let $v\in \Uf(A) $  be the extension of $v^*$, i.e. $v^*\subseteq v$ and set $X=\{v\in \Uf(A): v^*\in \Uf(Y)\}$. By Lemma \ref{thm},  each $v\in X$ realizes $\tp([a]_\cD)$, and thus $[a]_\cD$ must realize $\tp(v)$. Therefore, by (i) we have $\<[a]_\cD\>^\omega_{S^*} \cong_P \<v\>^\omega_{S^\ue}$. Finally, by construction $|X| = |\Uf(Y)|$.\\

		\noindent (iii) Fix $[a]_\cD\in A^*$.  Consider the decreasing sequence from (\ref{I_n}). For all $i\in I$ let  $i\in I_{\max(i)}\setminus I_{\max(i)+1}$ and define the set
		\begin{align}
			A_{\max(i)} = \{w\in A: \<w\>^{\max(i)}_S\cong_P \<a(i)\>^{\max(i)}_{S} \}.
		\end{align}
		
		Since $[a]_\cD$ is non-diagonal, for each $i\in I$ we can fix $B_i\subseteq A_{\max(i)}$ with $|B_i|=\aleph_0$. Then $|\prod_{i\in I}B_i/\cD| \geq 2^\kappa$, as $\cD$ is at least $\kappa$-regular. Also,  each $[b]_\cD \in \prod_{i\in I}B_i/\cD$ realizes  $\tp([a]_\cD)$: Let $\chi_n\in \tp([a]_\cD)$, we claim that
		\begin{align}
			\gamma(n)\subseteq \{i\in I: \cA^\circ\models\chi_n(b(i)) \}\in \cD,
		\end{align}
		where $\gamma(n)$ is from (\ref{gamma}). Indeed, since $j\in I_{\max(j)}$, thus $n\leq \max(j)$ by construction. This implies  $\<b(j)\>^n_S \cong_P \<a(j)\>^n_S$, consequently $j\in  \{i\in I: \cA^\circ\models\chi_n(b(i)) \}$. Finally, similarly to (i), one can show that whenever some $[b]_\cD$ realizes $\tp([a]_\cD)$, then $\<[b_\cD]\>^\omega_{S^*}\cong_P \<[a]_\cD\>^\omega_{S^*}$.
	\end{proof}
	
	Recall the elementary facts that whenever $\cA\leq \cB$ and for each finite sequence $\overline{a}\in A$ and $b\in B$ if there is an automorphism fixing $\overline{a}$, while moving $b$ into $A$,  then $\cA\preceq\cB$. Also, whenever $\cA\preceq \cC$ and $\cA\leq \cB$ with $\cB\preceq \cC$, then $\cA\preceq\cB$. We make use of these without any further reference.
	
	\begin{theorem}\label{thm:almost_bounded}
		Let  $\cA$ be an almost bounded structure with $|A|=\lambda$.  If the ultrapower $\cA^*$ is  $2^\lambda$-regular , then $\cA\preceq\uf\preceq\cA^*$.
	\end{theorem}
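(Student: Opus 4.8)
The plan is to establish the elementary chain on the decomposed language and then transport it back through the translation $(\cdot)^\sharp$. Concretely, I would first prove $\cA^\flat\preceq\cA^\sharp$; Proposition~\ref{prop_transfer} then immediately upgrades this to $\uf\preceq\cA^*$. For the lower link $\cA\preceq\uf$ I would not argue directly: the diagonal embedding gives $\cA\preceq\cA^*$ by \L o\'s's Lemma, and I will have arranged the inclusions $\cA\subseteq\uf\subseteq\cA^*$ so that the composite $w\mapsto\pi_w\mapsto[w]_\cD$ is exactly the diagonal map. The sandwich principle recalled just before the statement ($\cA\preceq\cC$ together with $\cA\leq\cB$ and $\cB\preceq\cC$ imply $\cA\preceq\cB$) then yields $\cA\preceq\uf$ for free. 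Thus everything reduces to constructing an elementary embedding $g\colon\cA^\flat\to\cA^\sharp$ that fixes the constants and sends each $\pi_w$ to $[w]_\cD$.

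To build $g$ I exploit that, by Observation~\ref{obs_degree}(i), both $\<A,S\>$ and its extension are bounded, so $\<\Uf(A),S^\ue\>$ and $\<A^*,S^*\>$ have only finite-degree nodes; hence their $S$-components are pairwise $S$-disconnected, every $P$-edge lands on one of the finitely many constants $d_w$, and (by a pigeonhole over the finite $S$-neighbourhoods together with \L o\'s's Lemma) the $S^*$-component of a diagonal element consists entirely of diagonal elements. On principal ultrafilters I set $g(\pi_w)=[w]_\cD$. On a non-principal $S^\ue$-component $C=\<u\>^\omega_{S^\ue}$ with root $u$, I realize $\tp(u)$ in $\cA^\sharp$: it is finitely satisfiable there because, by Lemma~\ref{thm}, each $V_n\in u$ is nonempty with members satisfying $\chi_n$ in $\cA^\circ$, and a regular (hence $\aleph_1$-incomplete) ultrapower realizes such a countable type over the finitely many parameters $d_w$. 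A realizing element may be taken non-diagonal (the witness sets $V_n$ are infinite, exactly as in the proof of Lemma~\ref{lemma:26}(ii)), and Lemma~\ref{lemma:26}(i) then supplies a pointed $\cong_{P}$-isomorphism of $C$ onto the $S^*$-component of that element, along which I define $g$ on $C$. Since $|\Uf(A)|=2^{2^\lambda}$ and, by Lemma~\ref{lemma:26}(iii) with $\kappa=2^\lambda$, every non-diagonal $\cong_{P}$-type is realized in $2^{\kappa}=2^{2^\lambda}$ pairwise $S^*$-disconnected components, a transfinite recursion places the distinct components $C$ into distinct fresh $S^*$-components; the all-diagonal-component fact guarantees these images avoid the diagonal part, so $g$ is a well-defined injective $\{S,P,d_w\}$-embedding.

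It remains to show $g$ is elementary, for which I use the Tarski--Vaught criterion via explicit automorphisms. Write $B=g[\Uf(A)]$ and suppose $\cA^\sharp\models\varphi(c,\overline b)$ for a finite tuple $\overline b\subseteq B$ and some $c\in A^*$. If $c\in B$ (in particular whenever $c$ is diagonal) there is nothing to do. Otherwise $c$ is non-diagonal and, by the all-diagonal-component fact and the shape of $g$, its whole $S^*$-component $C_c$ is disjoint from $B$. By Lemma~\ref{lemma:26}(ii) there are $2^{2^{\aleph_0}}$ ultrafilters $u_\alpha$ with $\<u_\alpha\>^\omega_{S^\ue}\cong_{P}\<c\>^\omega_{S^*}$ lying in distinct components; choosing one whose image component $C_{c'}=\<g(u_\alpha)\>^\omega_{S^*}\subseteq B$ avoids the finitely many components met by $\overline b$, I obtain a pointed $\cong_{P}$-isomorphism $\phi\colon C_c\to C_{c'}$ with $\phi(c)=c':=g(u_\alpha)\in B$. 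The map $\sigma$ acting as $\phi$ on $C_c$, as $\phi^{-1}$ on $C_{c'}$, and as the identity elsewhere is an automorphism of $\cA^\sharp$: distinct $S^*$-components carry no $S^*$-edges between them, and each $P^*$-edge targets a constant $d_w$, which $\sigma$ fixes, so both $S^*$ and $P^*$ are preserved. As $\overline b$ lies outside $C_c\cup C_{c'}$, $\sigma$ fixes $\overline b$ while $\sigma(c)=c'\in B$; applying $\sigma$ to $\cA^\sharp\models\varphi(c,\overline b)$ gives the witness $\cA^\sharp\models\varphi(c',\overline b)$ inside $B$. Hence $\cA^\flat\preceq\cA^\sharp$, and Proposition~\ref{prop_transfer} together with the sandwich argument closes the proof.

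The hard part is not the verification of elementarity but the bookkeeping of the embedding: one must place $2^{2^\lambda}$ many $S^\ue$-components into pairwise disjoint, non-interfering $S^*$-components while keeping principals on the diagonal, and this is precisely where the hypothesis bites. The $2^\lambda$-regularity is what makes $\cA^\sharp$ rich enough (Lemma~\ref{lemma:26}(ii)--(iii)) to furnish $2^{2^\lambda}=|\Uf(A)|$ pairwise $S^*$-disconnected copies of each $\cong_{P}$-type. Once the boundedness of $\<A^*,S^*\>$ is used to confine all interaction to within single $S^*$-components and to the finitely many constants $d_w$, the component-swap automorphisms render the Tarski--Vaught step essentially combinatorial, so no appeal to abstract homogeneity of the ultrapower is required.
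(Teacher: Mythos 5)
Your proposal is correct and follows essentially the same route as the paper's proof: reduce to the decomposed language via $(\cdot)^\sharp$ and Proposition~\ref{prop_transfer}, obtain $\cA\preceq\uf$ by the sandwich principle through the diagonal embedding, build the embedding $\cA^\flat\to\cA^\sharp$ by transfinite recursion sending principal ultrafilters to diagonal elements and realizing $\tp(u)$ of each fresh non-principal component via Lemma~\ref{thm} and Lemma~\ref{lemma:26}, and finally verify elementarity by component-swapping automorphisms together with Tarski--Vaught. Your explicit auxiliary facts (that the $S^*$-component of a diagonal element is all diagonal, and that every $P^*$-edge touches one of the constants $d_w$, so the swaps are automorphisms and fresh components avoid the diagonal part) are points the paper leaves implicit, but the argument is the same.
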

	\begin{proof}
		Let $\cA=\langle A,R\rangle$ be an almost bounded structure with $\lambda = |A|$, and set $\kappa =  2^{2^\lambda} = |\Uf(A)|$. Consider its decomposition $\cA^\circ =\< A, P, S, d_w\>_{w\in R_\infty}$ and let $\cA^\sharp= \langle A^*, P^*, S^*, d_w \rangle_{w\in R _\infty}$  be an (at least) $2^\lambda$-regular ultrapower of $\cA^\circ$. It is enough to show that $\cA^\circ\preceq \cA^\flat\preceq\cA^\sharp$, since by Proposition \ref{prop_transfer} it  follows that $\cA\preceq\cA^\ue \preceq \cA^*$. First, we argue for the embedding $\cA^\flat \leq\cA^\sharp$. 
		By Lemma \ref{lemma:26} we get
		\begin{align}
			\langle w\rangle^\omega_S\cong_P\langle \pi_w\rangle^\omega_{S^\ue}\cong_P \langle [c_w]_\cD\rangle^\omega_{S^*},
		\end{align}
		where $[c_w]_\cD$ is the image of $w$ under the diagonal embedding.
		Hence, we let  $f_0:\cA^\flat\to\cA^\sharp$ to be the partial $P$-isomorphism,  where $f_0(\pi_w) = [c_w]_\cD$. Consider an enumeration $\{u_\gamma\in \Uf^*(A): \gamma <\kappa\}$.
		For $\gamma <\kappa$ we construct  a sequence of  partial $P$-isomorphisms $f_\gamma : \cA^\flat\to\cA^\sharp$, such that:
		\begin{enumerate}\itemsep-2pt
			\item[($a$)] $f_\alpha \subseteq f_{\gamma}$, for $\alpha < \gamma$,
			\item[($b$)] $\langle u_\gamma\rangle^\omega_{S^\ue}\subseteq \mathrm{dom}(f_{\gamma+1})$, 
			\item[($c$)] $|\mathrm{dom}(f_\gamma)|  \leq |\gamma| +\aleph_0$,
			\item[($d$)] $\langle u_\gamma\rangle^\omega_{S^\ue} \cong_P \langle f_{\gamma+1}(u_\gamma)\rangle^\omega_{S^*}$,
			\item[($e$)] $f_\gamma = \bigcup_{\alpha<\gamma}f_\alpha$ if $\gamma$  is limit.
		\end{enumerate}
		Assume for some $\beta<\kappa$ we have already defined the  map $f_\beta$, so let us define $f_{\beta+1}$.  There are two cases: if $u_\beta \in \mathrm{dom}(f_\alpha)$ for some $\alpha < \beta$, then let $f_{\beta +1} = f_\beta$. Otherwise $u_\beta \not\in \bigcup_{\alpha<\beta}\dom(f_\alpha)$. In this scenario  no $u\in\langle u_\beta\rangle^\omega_{S^\ue}$ is mapped by any previously defined $f_\alpha$ to $\cA^\sharp$.     By Theorem \ref{thm} $\tp(u_\beta)$ is finitely satisfiable in $\cA^\circ$, hence in $\cA^\sharp$. Since $\cA^\sharp$ is $\lambda$-regular, it is $\lambda^+$-saturated  over types having parameters from $[A]^{<\lambda}$. As $\tp(u_\beta)$ is a type in $ \cS^{\cA^\sharp}(R_\infty)$,  there is some $[b]_\cD\in A^*$ realizing it. Let $X =\bigcup_{\alpha<\beta} \<f_{\alpha+1}(u_\alpha)\>^\omega_{S^*}$, then $|X| \leq |\beta|\cdot\aleph_0 < \kappa$.
		Hence, by Lemma \ref{lemma:26} we can find an element $[a]_\cD\in A^*$ such that $\<[u_\beta]_\cD\>^\omega_{S^\ue}\cong_P\<[a]_\cD\>^\omega_{S^*}\cong_P\<[b]_\cD\>^\omega_{S^*}$ with $\<[a]_\cD\>^\omega_{S^*}\cap X=\emptyset$. Denote the $P$-isomorphism from $\<[u_\beta]_\cD\>^\omega_{S^\ue}$ to $\<[a]_\cD\>^\omega_{S^*}$ by $\eta$, and define $f_{\beta+1}$ as
		\begin{align}
			f_{\beta+1}= \bigcup_{\alpha\leq \beta} f_\beta \cup \{\<u,\eta(u)\>: u\in \<u_\beta\>^\omega_{S^\ue}  \}.
		\end{align}
		
		It is easy to see that $f_{\beta+1}$ indeed satisfies the conditions ($a$)-($e$). Finally, we set $f = \bigcup_{\gamma <\kappa}f_\gamma$. Then $f$ is an embedding with domain $\Uf(A)$. It is left to show that $\cA^\flat\preceq\cA^\sharp$. Pick $f(u_0),\dots, f(u_n)\in A^*$ and $[a]_\cD\in A^*$.  By Lemma \ref{lemma:26}  there will be $v_0,\dots, v_{n+1}\in \Uf(A)$ such that for $\ell,i\leq n+1$ we have $\langle f(v_i)\rangle^\omega_{S^*}\cong_P \langle [a]_\cD\rangle^\omega_{S^*}$, and  $\langle f(v_i)\rangle^\omega_{S^*}\neq \langle f(v_\ell)\rangle^\omega_{S^*}$. Hence, for  at least one such $v_i$ we must have  $\langle f(v_i)\rangle^\omega_{S^*}\neq \langle f(u_j)\rangle^\omega_{S^*}$, for  $ i\neq j\leq n$. 
		Finally, let $h$  be the automorphism that $P$-isomorphically permutes each element of $\langle [a]_\cD\rangle^\omega_{S^*}$ into $\langle f(v_i)\rangle^\omega_{S^*}$, and fixes every other point-wise. Since $h$ moves $[a]_\cD$ into $f[\cA^\flat]$,  we obtain that  $\cA^\circ\preceq\cA^\flat\preceq\cA^\sharp$. Finally, by Proposition \ref{prop_transfer} we get $\cA\preceq\cA^\ue\preceq\cA^*$.
	\end{proof}

	\subsection{On elementary embeddings}
	Under a different notion of ultrafilter extension, \cite{Saveliev} showed that  elementary embeddings cannot be lifted up to ultrafilter extensions in general, and asked for what classes   this property is transferable. In our case we have a similar result.

	\begin{proposition}\label{elementary_ext}
		Elementary substructures do not transfer to their extensions in general.
	\end{proposition}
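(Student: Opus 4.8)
The plan is to refute the lifting property by exhibiting two non-almost-bounded structures $\cA_0\preceq\cA_1$ whose ultrafilter extensions fail to be even elementarily equivalent. Take $\cA_0=\<\omega,<\>$ with the strict order: every node has infinite out-degree, so $R_\infty=\omega$ and $\cA_0$ is \emph{not} almost bounded, placing it outside the scope of the lifting theorem \ref{cor:elementary}. Let $\cA_1=\cA_0^*$ be a proper ultrapower of $\cA_0$ modulo a non-principal ultrafilter on $\omega$. Then $\cA_0\preceq\cA_1$ by \L o\'s's Lemma, and $\cA_1$ contains a non-standard element $\nu$ (e.g.\ the class of the identity) lying $<^*$-above every standard natural, so that its predecessor set $P_\nu=\{w\in A_1: w<^*\nu\}$ is infinite.

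The core of the argument is a direct computation of $R^\ue$ on the two extensions, isolating a first-order sentence that separates them. Using the definition of $R^\ue$ together with the fact that $R[X]$ is cofinite whenever $X\neq\emptyset$, one checks that in $\cA_0^\ue$ the principal ultrafilters $\pi_n$ are irreflexive and each has only the finitely many smaller principals $\pi_m$ ($m<n$) as $R^\ue$-predecessors, since a non-principal ultrafilter cannot contain the finite set $\{w:w<n\}$; on the other hand every non-principal ultrafilter is reflexive. Hence in $\cA_0^\ue$ \emph{no} irreflexive point has a reflexive predecessor. In $\cA_1^\ue$, by contrast, $\pi_\nu$ is irreflexive, while any non-principal ultrafilter $v$ concentrating on the (infinite) standard part $N\subseteq A_1$ is reflexive and satisfies $P_\nu\in v$, i.e.\ $R^\ue(v,\pi_\nu)$. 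Consequently the sentence
\[\varphi\ :=\ \exists x\,\exists y\,\big(\neg R(x,x)\wedge R(y,y)\wedge R(y,x)\big)\]
holds in $\cA_1^\ue$ but fails in $\cA_0^\ue$, so $\cA_0^\ue\not\equiv\cA_1^\ue$ and a fortiori $\cA_0^\ue\not\preceq\cA_1^\ue$.

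I expect the main obstacle to be the explicit behaviour of $R^\ue$ on non-principal ultrafilters: specifically, verifying that an ultrafilter concentrated on the standard naturals is reflexive and lies below $\pi_\nu$, and that the irreflexive principals of $\cA_0^\ue$ admit no reflexive predecessor. Both hinge on the asymmetry that $\<R\>(X)$ and $R[X]$ of an infinite set are cofinite, whereas the predecessor set of a standard $n$ is finite; once this is pinned down, the separating sentence $\varphi$ does the rest. (One could replace the ultrapower by the concrete elementary extension $\<\omega+\mathbb{Z},<\>$, at the cost of checking elementarity of the inclusion by hand.)
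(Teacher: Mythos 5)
Your proof is correct and takes essentially the same approach as the paper: the paper also starts from $\<\mathbb{N},<\>$ with a proper (non-isomorphic) elementary extension, observes that non-principal ultrafilters concentrating on the standard part are reflexive and are $R^\ue$-predecessors of the irreflexive principal ultrafilter $\pi_w$ of a nonstandard element, while in $\mathbb{N}^\ue$ every irreflexive point has only (irreflexive) principal predecessors, and then separates the two extensions by the same sentence up to rewriting ($\exists x\exists y(x<x\wedge x<y\wedge y\not<y)$). The only cosmetic difference is that you instantiate the extension concretely as an ultrapower, whereas the paper works with an arbitrary proper elementary extension.
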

	\begin{proof}
		Consider $\cN =\< \mathbb{N}, <^\cN\>$ with the standard ordering and fix any non-isomorphic elementary extension $\cA=\<A, <^\cA\>$. Thus, there are $w\in A$ such that  $\mathbb{N}\subseteq \<<^\cA\>(w)$. Fix $u\in \Uf^*(A)$ with $\mathbb{N}\in u$.
		Then $u<^{\cA^\ue} u$ and $u<^{\cA^\ue} \pi_w$, but $\pi_w\not <^{\cA^\ue} \pi_w$.
		Thus, for
		$\varphi^* :=\exists x \exists y (x < x \wedge x < y \wedge  y \not< y) $
		we obtain $\cN^\ue\not\models\varphi^*$, while $\cA^\ue\models \varphi^*$. Hence $\cN^\ue\not\preceq\cA^\ue$.
	\end{proof}

	\begin{remark}
		We note that the construction can be adopted to the ultrafilter extensions defined in the sense of \cite{Saveliev}: For $u,v\in \Uf(A)$, define the relation
		\begin{align}
			\widetilde{R}(u,v) \Leftrightarrow \{x\in A: \{y\in A: R(x,y)\}\in v\}\in u.
		\end{align}
		We can  significantly  simplify the main result (Theorem 1.5) of \cite{Saveliev}, stating that elementary substructures cannot be lifted up to their extensions, in general. We use the  language  of a single binary relation instead of the more complicated one from \cite{Saveliev}.
	\end{remark}
	\begin{proposition}\label{prop:transfer}
		In the language containing  a single binary relation symbol,  $\cA\preceq\cB$ does not necessarily imply $\widetilde{\cA}\preceq\widetilde{\cB}$.
	\end{proposition}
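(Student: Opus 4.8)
The plan is to recycle the example and the witnessing sentence from Proposition \ref{elementary_ext}, and merely re-verify that the three relevant adjacencies behave the same way under $\widetilde{R}$ as they did under $R^\ue$. Concretely, I would again take $\cN = \langle \mathbb{N}, <^\cN\rangle$ with the standard ordering, fix a proper (non-isomorphic) elementary extension $\cA = \langle A, <^\cA\rangle$, choose a point $w \in A$ lying above all standard naturals, and pick a non-principal $u \in \Uf^*(A)$ with $\mathbb{N} \in u$. The candidate distinguishing sentence would once more be $\varphi^* := \exists x\exists y\,(x < x \wedge x < y \wedge y \not< y)$, and the goal is to show $\widetilde{\cN} \not\models \varphi^*$ while $\widetilde{\cA} \models \varphi^*$.

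The bulk of the argument reduces to three pointwise computations with the definition $\widetilde{R}(u,v) \Leftrightarrow \{x : \{y : R(x,y)\} \in v\} \in u$. Writing $R(x)$ for the out-neighborhood $\{y : R(x,y)\}$, I would first show that in $\widetilde{\cN}$ every non-principal ultrafilter is $\widetilde{R}$-reflexive while every principal $\pi_n$ is not: since each $R(x)$ is cofinite, $\{x : R(x) \in u\} = \mathbb{N} \in u$ gives $\widetilde{R}(u,u)$, whereas $\widetilde{R}(\pi_n,\pi_n)$ would require $n < n$. Second, I would check that a reflexive (hence non-principal) point can never be $\widetilde{R}$-below a principal $\pi_n$, since $\widetilde{R}(u,\pi_n)$ unwinds to $\{x : x < n\} \in u$, a finite set excluded from any non-principal $u$. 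These two facts already force $\widetilde{\cN} \not\models \varphi^*$: the only non-reflexive points available for $y$ are principal, and no reflexive $x$ points to them.

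For the positive side I would verify in $\widetilde{\cA}$ that $u$ witnesses $x$ and $\pi_w$ witnesses $y$. Reflexivity $\widetilde{R}(u,u)$ follows as above once one notes that $u$ must have non-principal trace on $\mathbb{N}$ (else $u$ itself would be principal), so $R(x) \cap \mathbb{N}$, being cofinite in $\mathbb{N}$, lies in $u$ for every standard $x$, whence $\{x : R(x) \in u\} \supseteq \mathbb{N} \in u$. The adjacency $\widetilde{R}(u,\pi_w)$ unwinds to $\{x : x < w\} \in u$, which holds because this set contains all of $\mathbb{N} \in u$; and $\widetilde{R}(\pi_w,\pi_w)$ fails since $w \not< w$. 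Hence $\widetilde{\cA} \models \varphi^*$ while $\widetilde{\cN} \not\models \varphi^*$, so $\widetilde{\cN} \not\preceq \widetilde{\cA}$ even though $\cN \preceq \cA$.

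I expect no serious obstacle here, as the construction is dictated by Proposition \ref{elementary_ext}; the only point requiring care is confirming that the defining clause of $\widetilde{R}$ genuinely collapses the relevant membership conditions to the same finite/cofinite comparisons that drove the $R^\ue$ argument — in particular that the asymmetry between principal and non-principal ultrafilters survives the change of definition — rather than producing new adjacencies that would let $\varphi^*$ hold in $\widetilde{\cN}$ after all.
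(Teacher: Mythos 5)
Your proof is correct and takes essentially the same route as the paper: the same pair $\cN\preceq\cA$, the same sentence $\varphi^*$, and the same witnesses $u$ (non-principal with $\mathbb{N}\in u$) and $\pi_w$. The only, harmless, difference is on the negative side: the paper observes $\cN^\ue\cong\widetilde{\cN}$ and reuses Proposition \ref{elementary_ext}, whereas you verify $\widetilde{\cN}\not\models\varphi^*$ directly from the definition of $\widetilde{R}$, which is if anything more self-contained.
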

	\begin{proof} As before, fix a non-isomorphic elementary extension $\cA$ of $\cN$.  Observe that $\cN^\ue \cong \widetilde{\cN}$, thus $\widetilde{\cN}\not\models\varphi^*$. To show  $\widetilde{\cA}\models\varphi^*$,  let  $w\in A$ be an element with infinitely many $<$-predecessors. Fix $u\in \Uf^*(A)$ with $\mathbb{N}\in u$. Then  $u \widetilde{<}u$, since for any $n\in \mathbb{N}$, the set $<\![n]= \{v: n<v\}\in u$, as $<\![n]\cap \mathbb{N}$ is cofinite in $\mathbb{N}$. Hence
		$\{x\in A: \{y\in A: x<y\}\in u\}\in u$.   Since $n<w$ for each  $n\in \mathbb{N}$, it follows that $u \widetilde{<}\pi_w$. Also $\pi_w \widetilde{\not<}\pi_w$,  thus $\widetilde{\cA}\models \varphi^*$.
	\end{proof}

	\begin{theorem}\label{cor:elementary}
		If $\cA_1$ and $\cA_0$ are almost bounded and $\cA_0\preceq \cA_1$, then $\cA^\ue_0\preceq \cA^\ue_1$.
	\end{theorem}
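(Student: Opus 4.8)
The plan is to transport everything to the decomposed setting and then sandwich the two extensions inside a single ultrapower. The first point to settle is that almost boundedness is first-order expressible: for fixed $m,b$ the sentence ``at most $m$ nodes have in- or out-degree exceeding $b$'' is first-order, as is each statement ``$w$ has at least $n$ neighbors''. Hence $\cA_0\preceq\cA_1$ forces a node of $A_0$ to have infinite degree in $\cA_0$ iff it does in $\cA_1$, so $R_\infty^{\cA_0}=R_\infty^{\cA_1}\cap A_0$; combining this with the (first-order) almost-boundedness of both structures pins down $R_\infty^{\cA_0}=R_\infty^{\cA_1}=:R_\infty\subseteq A_0$. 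Thus $\cA_0^\circ$ and $\cA_1^\circ$ live in the same language $L_{\{S,P,d_w\}_{w\in R_\infty}}$, and since $S,P$ are definable from $R$ and the named constants, $\cA_0\preceq\cA_1$ upgrades to $\cA_0^\circ\preceq\cA_1^\circ$.

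Next I would fix a single ultrafilter $\cD$ over an index set $I$ that is regular enough for both applications of Theorem \ref{thm:almost_bounded} and for the extension below to have room (concretely, regular enough that Lemma \ref{lemma:26}(iii) yields strictly more than $|\Uf(A_1)|$ pairwise neighborhood-disjoint realizations of each type), and form $\cA_i^\sharp=(\cA_i^\circ)^*$ and $\cA_i^\flat=(\cA_i^\circ)^\ue$ over this common $\cD$. Functoriality of the ultrapower turns $\cA_0^\circ\preceq\cA_1^\circ$ into an elementary embedding $j:\cA_0^\sharp\preceq\cA_1^\sharp$, while Theorem \ref{thm:almost_bounded} supplies elementary embeddings $e_i:\cA_i^\flat\preceq\cA_i^\sharp$. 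I would then check that the pushforward $\iota:\Uf(A_0)\to\Uf(A_1)$, $u\mapsto\{Y\subseteq A_1:Y\cap A_0\in u\}$, is an embedding of $\cA_0^\flat$ into $\cA_1^\flat$: it sends principal ultrafilters to principal ones (so it fixes the constants $d_w$), and using $\cA_0\preceq\cA_1$ one gets $S_1[X]=S_0[X]\subseteq A_0$ and $P_1[X]\cap A_0=P_0[X]$ for $X\subseteq A_0$ (finite-degree nodes acquire no new $S$-neighbors by elementarity, and new $P$-neighbors can only attach to the shared nodes of $R_\infty$, hence are invisible to ultrafilters containing $A_0$). From this $\iota$ preserves and reflects $S^\ue$ and $P^\ue$, and since $R^\ue=S^\ue\cup P^\ue$ by Proposition \ref{prop:iso}, it is in particular an embedding $\cA_0^\ue\hookrightarrow\cA_1^\ue$.

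The heart of the argument is to make these maps compatible, i.e.\ to choose $e_1$ so that $e_1\circ\iota=j\circ e_0$ on $\Uf(A_0)$. I would rerun the back-and-forth of Theorem \ref{thm:almost_bounded} for $\cA_1$, but seed it with the partial map $j\circ e_0\circ\iota^{-1}$ on $\iota(\Uf(A_0))$, augmented by the canonical assignment $\pi_w\mapsto[c_w]_\cD$ on the remaining principal ultrafilters; these agree on their overlap because $\iota(\pi_w)=\pi_w$ and $j(e_0(\pi_w))=[c_w]_\cD$. This seed is a partial $P$-isomorphism since $j\circ e_0$ and $\iota$ are structure-preserving, and by Lemma \ref{thm} (finite satisfiability of each $\tp(u_\beta)$) together with Lemma \ref{lemma:26}(iii) (abundance of pairwise $P$-isomorphic, neighborhood-disjoint realizations) the construction extends to an elementary embedding $e_1:\cA_1^\flat\preceq\cA_1^\sharp$ with the desired compatibility. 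Then $e_1\!\restriction\!\iota(\cA_0^\flat)=(j\circ e_0)\circ\iota^{-1}$ is elementary into $\cA_1^\sharp$, so $\iota(\cA_0^\flat)\preceq\cA_1^\sharp$; since also $\iota(\cA_0^\flat)\le\cA_1^\flat\preceq\cA_1^\sharp$, the sandwiching principle recalled before Theorem \ref{thm:almost_bounded} gives $\cA_0^\flat\preceq\cA_1^\flat$. Transferring back through Proposition \ref{lem_transfer}, and using that $\iota$ respects the decomposition, yields $\cA_0^\ue\preceq\cA_1^\ue$.

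The step I expect to be the main obstacle is exactly this compatible rerun of the back-and-forth: one must verify both that the seed inherited from $\cA_0$ is a genuine partial $P$-isomorphism and that the cardinal bookkeeping of Theorem \ref{thm:almost_bounded} survives when the construction begins from a pre-assigned domain of size up to $2^{2^{|A_0|}}$, which is why I insist on taking $\cD$ more regular than the bare hypothesis of that theorem demands. A secondary technical point is the preservation of $P^\ue$ under $\iota$, since the nodes of $R_\infty$ gain new neighbors in $A_1\setminus A_0$ and one must confirm that these do not disturb the equivalence $P_1^\ue(\iota u,\iota v)\Leftrightarrow P_0^\ue(u,v)$.
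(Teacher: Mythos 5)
Your proposal is correct, and its first paragraph even supplies a detail the paper takes for granted (that $R_{0_\infty}=R_{1_\infty}$, without which $\cA_0^\circ\preceq\cA_1^\circ$ does not typecheck); but from the embedding onwards you take a genuinely different route. The paper obtains the embedding $\cA_0^\flat\leq\cA_1^\flat$ abstractly from BAO duality (\cite{black}, Theorem 5.47) --- it is the same map as your pushforward $\iota$ --- and then argues \emph{locally} rather than globally: it invokes the Keisler--Shelah theorem to produce isomorphic $\kappa$-regular ultrapowers $\cA_0^\sharp\cong\cA_1^\sharp$ with $\kappa=\max\{2^{2^{|A_0|}},2^{2^{|A_1|}}\}$, uses Theorem \ref{thm:almost_bounded} only as a black box to get $\cA_i^\flat\preceq\cA_i^\sharp$, transports a given $v\in\Uf(A_1)$ through $e_1$ and the isomorphism to some $[a]_\cD\in A_0^*$, pulls it back by Lemma \ref{lemma:26}(ii) to some $u\in\Uf(A_0)$ with $\<u\>^\omega_{S_0^\ue}\cong_P\<v\>^\omega_{S_1^\ue}$ whose component avoids the finitely many parameters $u_0,\dots,u_n$, and concludes by the automorphism criterion applied inside $\cA_1^\flat$ (swap the two $P$-isomorphic components, fix everything else pointwise). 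So the paper never needs the commuting square $e_1\circ\iota=j\circ e_0$: the back-and-forth of Theorem \ref{thm:almost_bounded} is never rerun with a pre-assigned domain of size $2^{2^{|A_0|}}$, and no regularity beyond $\kappa$ is required. What your route buys in exchange is that it avoids Keisler--Shelah entirely (functoriality of a single fixed ultrapower is much cheaper) and yields a stronger, more informative conclusion --- a commuting square of elementary embeddings with an explicit $\iota$ --- at the price of the seeded rerun you rightly single out as the main burden. That rerun does go through: condition ($c$) of the original construction was only ever used to keep the set of occupied components smaller than the supply of fresh realizations, and your extra regularity hypothesis restores this; the one verification you should add is that the image of $j\circ e_0$ on non-principal ultrafilters cannot meet the diagonal components over $A_1\setminus A_0$ (this follows because the $S_1$-component of any finite-degree node of $A_0$ stays inside $A_0$), so that your two seed pieces really do cohere into a single injective partial $P$-isomorphism.
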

	\begin{proof}
		Fix $\cA_i=\<A_i, R_i\>$ for $i\in \{0,1\}$ satisfying the given condition. Consider their decompositions $\cA_i^\circ =\<A_i, S_i, P_i, d_w\>_{w\in R_{{i}_\infty}}$. Then $\cA_0^\circ \preceq \cA_1^\circ$. From the duality between BAO's and relational structures (e.g. Theorem 5.47 of \cite{black}) it follows  that $\cA_0^\flat\leq \cA_1^\flat
		$. We show that the map is elementary, which implies $\cA_0^\ue\preceq \cA_1^\ue$. Pick $v\in \Uf(A_1)$ and $u_0,\dots, u_n\in \Uf(A_0)$. Let $\kappa = \max\{2^{2^{|A_0|}}, 2^{2^{|A_1|}}\}$.  By the Keisler--Shelah Isomorphism Theorem we  find isomorphic ultrapowers $\cA^\sharp_0=\<A^*_0, S^*_0,P^*_0, d_w\>_{w\in R_{0_\infty}}$ and $\cA_1^\sharp=\<A^*_1, S^*_1,P^*_1, d_w\>_{w\in R_{1_\infty}}$, both are $\kappa$-regular.  Using the same techniques from Theorem \ref{thm} and $\cA_0^\sharp\cong \cA_1^\sharp$, we obtain $\cA_i^\flat \preceq \cA_i^\sharp$ and some $[a]_\cD\in A_0^*$ such that $\<[a]_\cD\>_{S^*_0}^\omega \cong_P \<v\>^\omega_{S_1^\ue}$. Finally,  by Lemma \ref{lemma:26} we find $u\in \Uf(A_0)$ such that
		\begin{enumerate}\itemsep-2pt
			\item[($a$)] $\<u\>^\omega_{S^\ue_0}\cong_P \<[a]_\cD\>^\omega_{S^*_0}$,
			\item[($b$)] $\<u_i\>^\omega_{S^\ue_0}\neq \<u\>^\omega_{S^\ue_0}$ for $i\leq n$.
		\end{enumerate}
		Consequently, we can easily define  an automorphism moving $P$-isomorphically $\<v\>^\omega_{S_1^\ue}$ into $\<u\>^\omega_{S_0^\ue}$, while fixing everything else point-wise.
	\end{proof}

	\subsection{Comparing modal logics}
	As opposed to  bounded structures, we do not necessarily have modal equivalence between $\cA$ and $\cA^\ue$. The following construction provides such  classes, but can be instructive in itself.
	
	\begin{example}\label{emp}
		Let $(\ast)$ be the  (second-order) frame condition for some fixed $w\in A$:  each $u,v\in R[w]$ is connected by some finite path $uRw_1\dots w_nRv$, where  $w_i\in R[w] $ for $1\leq i\leq n$.  Define the condition $(\ast\ast)$ for some $w\in A$ as: ($\ast$) holds for $w$ \textit{and}
		\begin{align*}
			\text{ \big($w$ is irreflexive or $|R[w]|\leq 1$ or $(\exists v\in R[w]\setminus\{w\})(R(w,v)$ and  $R(v,w))$\big).}
		\end{align*} 
		Consider the  modal formulas:
		\begin{align}
			\begin{split}
				\mathsf{Alt}_n &:=\Box p_0\vee \dots \vee \Box( \bigwedge_{i=0}^{n-1} p_i \to p_{n})\\
				\varphi &:=\Big(p\wedge \neg q \wedge \square \big(p\wedge  q\to \square (p\wedge  q)\big)\wedge  \diamondsuit (p\wedge q)\Big)\to \square (p\wedge q)
			\end{split}
		\end{align}
		such that $\mathsf{Alt}_n$ and $\varphi$ share no common variables. 

		\begin{claim}
			The formula $\varphi$ (locally) corresponds to the  condition $(\ast\ast)$.
		\end{claim}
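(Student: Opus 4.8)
The plan is to turn the universal-over-valuations statement into a purely combinatorial condition on the one-step neighbourhood $R[w]$, and then match that condition with $(\ast\ast)$ in two directions. First I would observe that the truth of $\varphi$ at $w$ depends on a valuation $V$ only through the set $X := \{x : \langle\cA,V\rangle, x\Vdash p\wedge q\}$ together with the two facts $w\in\|p\|$ and $w\notin\|q\|$; since any $X$ with $w\notin X$ is realized by taking $\|p\| = X\cup\{w\}$ and $\|q\| = X$, validity of $\varphi$ at $w$ is equivalent to the following: for every $X\subseteq A$ with $w\notin X$, if (a) every $v\in R[w]\cap X$ satisfies $R[v]\subseteq X$, and (b) $R[w]\cap X\neq\emptyset$, then $R[w]\subseteq X$. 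Reading (a) as forward-closure of $X\cap R[w]$ under those $R$-successors that land back in $R[w]$, this says precisely that $R[w]$ admits no nonempty proper subset that is forward-closed and avoids forcing $w$ into $X$.

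For the direction $(\ast\ast)\Rightarrow\varphi$ I would assume the antecedent, fix a witness $v_0\in R[w]\cap X$ supplied by (b), and propagate membership in $X$ along $R$-paths inside $R[w]$: condition (a) shows that whenever a successor lies in $X$ so do all of its successors, and $(\ast)$ guarantees that from $v_0$ every other successor is reached by such a path, forcing $R[w]\subseteq X$. The role of the disjunction is to exclude the degenerate configuration at a reflexive $w$ in which the propagation would either drag $w$ itself into $X$ (contradicting $w\notin X$) or stall; the three disjuncts ($w$ irreflexive, $|R[w]|\leq 1$, or a genuine $2$-cycle $R(w,v)\wedge R(v,w)$) are exactly the cases in which this obstruction is absent.

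The converse $\varphi\Rightarrow(\ast\ast)$ I would prove contrapositively, by manufacturing a refuting valuation whenever $(\ast\ast)$ fails. If $(\ast)$ fails, two successors are not joined by any $R$-path through $R[w]$; taking $X$ to be the forward-closure within $R[w]$ of the component of one of them (together with its outward successors) produces a nonempty proper forward-closed set, and one checks it can be chosen to avoid $w$, so that (a) and (b) hold but $R[w]\subseteq X$ does not. If instead $(\ast)$ holds but the disjunction fails — so $w$ is reflexive, $|R[w]|\geq 2$, and no successor $v\neq w$ has $R(v,w)$ — then $w\in R[w]$ makes the consequent $\square(p\wedge q)$ unattainable while the antecedent is still satisfiable by some $X\subseteq R[w]\setminus\{w\}$, again refuting $\varphi$. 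The delicate point throughout, and the step I expect to be the main obstacle, is verifying that the set $X$ in each refutation can be kept disjoint from $w$ \emph{precisely} when $(\ast\ast)$ fails: this is the case analysis on the reflexivity of $w$ and on the presence of a back-edge into $w$, which is exactly the bookkeeping that the disjunction in $(\ast\ast)$ is designed to carry.
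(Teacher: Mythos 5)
Your reduction of the local validity of $\varphi$ at $w$ to the combinatorial statement about sets $X$ with $w\notin X$ is correct, and your forward direction (propagating membership in $X$ along $(\ast)$-paths, with the reflexive case handled because $R[w]\subseteq X$ would force $w\in X$) is essentially the paper's Cases (D)--(E). The gap is in the converse, at exactly the step you yourself flag: when $(\ast)$ fails for a pair $u,v\in R[w]$ you take $X$ to be the forward closure ``within $R[w]$'' of one of them and assert that ``one checks it can be chosen to avoid $w$''. This cannot be checked, and no case analysis on reflexivity and back-edges will rescue it. Consider $A=\{w,a,b\}$ with $R=\{\langle w,a\rangle,\langle w,b\rangle,\langle a,w\rangle,\langle b,w\rangle\}$: here $w$ is irreflexive, and if the connecting paths of $(\ast)$ must have all intermediate points inside $R[w]=\{a,b\}$, then $(\ast)$ (hence $(\ast\ast)$) fails for the pair $a,b$; yet $\varphi$ \emph{is} valid at $w$, since any admissible $X$ meeting $R[w]$ contains $a$ or $b$, and forward closure (your condition (a)) then forces $w\in X$. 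So under that strict reading your contrapositive is refuted outright. The reading that makes the Claim true --- intermediate points taken from $R[w]\cup\{w\}$, which is also what the paper's own Case (B) construction $X_0=R[v]$, $X_{n+1}=R[X_n\cap R[w]]$ needs in order for its assertion $w\notin X_n$ to hold --- changes the nature of the missing step: avoiding $w$ is then not a matter of \emph{choice} but a consequence of the failure of $(\ast)$, proved by the observation that if $w$ lay in the closure of $v$, appending the single edge $w\,R\,u$ to a path witnessing this would produce an admissible path from $v$ to $u$, contradicting the choice of the pair. That appending argument is the real content of this direction, and it is absent from your proposal.

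Two smaller points. First, your diagnosis that the disjunction in $(\ast\ast)$ ``carries the bookkeeping'' for keeping $X$ away from $w$ is a misattribution: the disjunction plays no role in that step, and in fact $(\ast)$ already implies it (if $w$ is reflexive, $|R[w]|\geq 2$ and $(\ast)$ holds, then the last edge of a path from some $v\in R[w]\setminus\{w\}$ to $w$ produces a two-cycle through $w$), so the only non-vacuous failure case is the failure of $(\ast)$ itself. Second, in your case ``$(\ast)$ holds but the disjunction fails'', the proposed witness $X\subseteq R[w]\setminus\{w\}$ does not satisfy your own condition (a) in general, since (a) forces $X$ to contain successors that may lie outside $R[w]$; the witness that works --- and the one the paper uses there, via $V(p)=A$, $V(q)=A\setminus\{w\}$ --- is $X=A\setminus\{w\}$, whose forward-closedness is precisely the no-back-edge hypothesis of that case.
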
	
		\begin{proof}
			$(\Leftarrow)$ Suppose $(\ast\ast)$ holds for some $\cA$ and  $w\in A$.  Fix a valuation $V$, we show $\<\cA, V\>,w\Vdash \varphi$.\\
			\textsc{Case (A):} $R[w]=\emptyset$. Then $\<\cA, V\>,w\Vdash \varphi$ holds, as $\<\cA, V\>,w\Vdash\Box(p\wedge q)$ holds.\\
			\textsc{Case (B):} $R[w] =\{v\}$, with $v\neq w$. Since $v$ is the unique successor of $w$, if $V$ is such that $\<\cA, V\>, v\Vdash p\wedge q$, then $\<\cA, V\>, w\Vdash \Box( p\wedge q)$. Thus $\<\cA, V\>,w\Vdash \varphi$.\\
			\textsc{Case (C):} $R[w] = \{w\}$. Then $\<\cA, V\>,w\Vdash \varphi$, holds as $\<\cA, V\>,w\not\Vdash (p\wedge\neg q)\wedge \diamondsuit(p\wedge q)$.\\
			So far the cases cover the possibilities for $|R[w]| \leq 1$.\\
			\textsc{Case (D):} $(\exists v\in R[w]\setminus\{w\}) R(w,v)\wedge R(v,w)$. We claim that the antecedent of $\varphi$ cannot be datisfied at $w$, thus  $\<\cA, V\>,w\Vdash \varphi$. Otherwise, suppose that indeed $V$ satisfies it. Then for some $v_0\in R[w]\setminus\{w\}$, we have $\<\cA, V\>,v_0\Vdash p\wedge q$.
			Then by $(\ast)$  there is some finite path $v_0R\dots Rv_n$ with $v_n=v$, and $w_i\in R[w]$. From $\<\cA,V\>,w\Vdash\Box(p\wedge q\to \Box (p\wedge q))$, and  $\<\cA, V\>,v_0\Vdash p\wedge q$,   it follows  by induction that  $\<\cA,V\>,v\Vdash p\wedge q$. But then $\<\cA,V\>,v\Vdash\Box( p\wedge q)$,  hence  $\<\cA,V\>,w\Vdash p\wedge q$, which is a contradiction.\\
			\textsc{Case (E):} $w$ is irreflexive, $|R[w]| > 1$ and $(\forall u\in R[w]\setminus\{w\})(R(w,u)\to \neg R(u,w))$. This case is almost the same as  above.
			Without loss of generality, assume that $V$ is such that the antecedent of $\varphi$ is satisfied at $w$. Let $v\in R[w]$, we show that $\<\cA, V\>\Vdash p\wedge q$. By assumption,  there exists a $v_0\in R[w]\setminus\{w\}$ with $\<\cA, V\>,v_0\Vdash p\wedge q$.
			Then by $(\ast)$  there is some finite path $v_0R\dots Rv_n$ with $v_n=v$, and $w_i\in R[w]$. From $\<\cA,V\>,w\Vdash\Box(p\wedge q\to \Box (p\wedge q))$, and  $\<\cA, V\>,v_0\Vdash p\wedge q$,   it follows  by induction that  $\<\cA,V\>,v\Vdash p\wedge q$.\\
			
			\noindent	($\Rightarrow$) Suppose $(\ast\ast)$ does not hold for some $\cA$ and $w\in A$.\\
			Case (A):  $w$ is reflexive moreover, there is some $v\in R[w]\setminus\{w\}$ with $w\not\in R[v]$, and  $(\forall u\in R[w]\setminus\{w\})(R(w,u)\to \neg R(u,w))$.  Consider the valuation 
			\begin{align}
				\begin{split}
					V(p)& = A,\\
					V(q) &= A\setminus \{w\}.
				\end{split}
			\end{align}
			It is easy to see that indeed $\<\cA, V\>, w\not\Vdash\varphi$. \\
			\text{Case (B):} $(\ast)$ does not hold for $w$, i.e. there are $v,u\in R[w]$ having no finite path from $v$ to $u$ consisting of elements from $R[w]$. This implies that $v\neq w$.   Consider the sets
			\begin{align}
				\begin{split}
					X_0 &= R[v] \\
					X_{n+1} &= R\big[ X_n\cap R[w]\big] .
				\end{split}
			\end{align}
			Then $w\not\in X_n$ and $w\not\in R[X_n]$, for every $n\in \omega$. By setting $V(p)= \bigcup_{n\in\omega}X_n \cup \{w,v\}$ and $V(q)=A\setminus\{w\}$, it is easy to see that $\<\cA, V\>, w\not\Vdash\varphi$, hence $\cA\not\Vdash \varphi$. 
		\end{proof}
		Let $\mathsf{K}$ be any  subclass of the following class:  $\cA$ is almost bounded, each node with infinite out-degree has $(\ast\ast)$, moreover  there is some $w\in A$ with infinite out-degree such that for each $v\in R[w]$ the set $R[w]\cap R[v]$ is finite. 	Let $n\in\omega$ be the bound on the degree   after forgetting the elements with infinite degree from $\cA$.

		\begin{claim}
			$\cA\Vdash \mathsf{Alt}_n\vee \varphi$, while $\cA^\ue\not\Vdash \mathsf{Alt}_n \vee \varphi$.
		\end{claim}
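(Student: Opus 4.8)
The plan is to prove the two halves of the claim separately, since $\mathsf{Alt}_n$ and $\varphi$ share no variables and thus can be analyzed independently.

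\medskip

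\noindent\textbf{First, $\cA\Vdash \mathsf{Alt}_n\vee\varphi$.} Fix a valuation $V$ and a world $a\in A$; I want to show $\langle\cA,V\rangle, a\Vdash \mathsf{Alt}_n\vee\varphi$. The key observation is to split by the out-degree of $a$. If $a$ has \emph{finite} out-degree, then after forgetting the finitely many infinite-degree nodes the structure is $n$-bounded, so $|R[a]|\le n$ whenever $a\notin R_\infty$; the formula $\mathsf{Alt}_n$ is exactly the frame condition asserting that a point has at most $n$ successors, so $\langle\cA,V\rangle, a\Vdash \mathsf{Alt}_n$ and we are done. If instead $a\in R_\infty$ has infinite out-degree, then by hypothesis every such node satisfies $(\ast\ast)$, so by the Claim already proven ($\varphi$ locally corresponds to $(\ast\ast)$) we get $\langle\cA,V\rangle, a\Vdash\varphi$. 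Since $V$ and $a$ were arbitrary, $\cA\Vdash\mathsf{Alt}_n\vee\varphi$.

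\medskip

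\noindent\textbf{Second, $\cA^\ue\not\Vdash\mathsf{Alt}_n\vee\varphi$.} Here I must exhibit a single valuation $V^\ue$ on $\cA^\ue$ and a world refuting \emph{both} disjuncts simultaneously. The natural candidate world is a non-principal ultrafilter $u$ sitting above the distinguished $w\in A$ of infinite out-degree for which $R[w]\cap R[v]$ is finite for each $v\in R[w]$. First I would refute $\mathsf{Alt}_n$ at $\pi_w$: because $w$ has infinite out-degree, $R^\ue[\pi_w]$ contains infinitely many distinct principal and non-principal ultrafilters, so $\pi_w$ has far more than $n$ successors, whence $\langle\cA^\ue,V^\ue\rangle,\pi_w\not\Vdash\mathsf{Alt}_n$ for a suitable choice of the $p_i$ separating $n+1$ of these successors. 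The harder part is to simultaneously refute $\varphi$ at the same point. Using the finiteness of $R[w]\cap R[v]$, I would choose a non-principal $u\in R^\ue[\pi_w]$ and build a valuation making $p\wedge q$ true along a ``spreading'' set but failing to propagate $\Box(p\wedge q)$ back to $\pi_w$, mirroring the second-order failure of $(\ast)$ in the extension. The point is that although each $v\in R[w]$ has only finitely much overlap $R[w]\cap R[v]$, the non-principal ultrafilters introduce a successor of $\pi_w$ that is \emph{not} connected by a finite $R[w]$-path back, so $(\ast)$ fails at $\pi_w$ in $\cA^\ue$, and one reuses the valuation of Case (B) of the correspondence argument lifted to the extension.

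\medskip

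\noindent\textbf{Main obstacle.} The delicate step is the second one: I must verify that $\pi_w$ (or the appropriate witness among its successors) genuinely violates $(\ast)$ in $\cA^\ue$, i.e. that the infinitely many non-principal successors created by the infinite out-degree cannot all be reached by finite $R^\ue[\pi_w]$-paths. This is where the hypothesis that $R[w]\cap R[v]$ is finite for each $v\in R[w]$ does the work: it prevents the successor set of $\pi_w$ from being internally ``highly connected,'' so the sets $X_n$ from Case (B) of the correspondence proof stay $w$-avoiding, and the same valuation refutes $\varphi^\ue$. The bookkeeping is making sure the single valuation refuting $\mathsf{Alt}_n$ (which only involves the variables $p_i$) and the valuation refuting $\varphi$ (which involves the disjoint variables $p,q$) can be combined without interference --- but since the two formulas share no propositional variables this combination is free, so the genuine difficulty is purely the $(\ast)$-failure in the extension.
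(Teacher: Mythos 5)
Your overall route is the same as the paper's: on $\cA$, points of finite degree validate $\mathsf{Alt}_n$ while points of infinite out-degree validate $\varphi$ via the correspondence with $(\ast\ast)$; on $\cA^\ue$ both disjuncts are refuted at the single world $\pi_w$, and the fact that $\mathsf{Alt}_n$ and $\varphi$ share no propositional variables lets the two refuting valuations be merged. The first half of your argument is complete, and the merging observation is correct.

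The gap is exactly the step you yourself label the ``main obstacle'': you assert, but never prove, that $(\ast)$ fails at $\pi_w$ in $\cA^\ue$. Saying that finiteness of $R[w]\cap R[v]$ ``prevents the successor set of $\pi_w$ from being internally highly connected'' is a restatement of the desired conclusion, not an argument. The missing computation is short, and it is what the paper extracts from Proposition \ref{prop:iso}: the successors of $\pi_w$ in $\cA^\ue$ are precisely the ultrafilters containing $R[w]$, and for $v\in R[w]$ one has $R^\ue(\pi_v,z)$ iff $R[v]\in z$; hence if $z$ is any \emph{non-principal} successor of $\pi_w$ and $R^\ue(\pi_v,z)$ held, then the set $R[w]\cap R[v]$ would belong to $z$, a finite set in a non-principal ultrafilter --- contradiction. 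Note that you need this for \emph{every} non-principal successor $z$, not only for your chosen witness $u$: a putative directed path from $\pi_v$ to $u$ inside $R^\ue[\pi_w]$ could a priori pass through non-principal intermediate successors, and it is ruled out only because any such path must contain a principal-to-non-principal step, which the computation above forbids. Once this is supplied, $(\ast)$ (hence $(\ast\ast)$) fails at $\pi_w$, Case (B) of the correspondence proof gives the valuation refuting $\varphi$ there, $\mathsf{Alt}_n$ is refuted at $\pi_w$ because it has more than $n$ successors, and the claim follows exactly as you outline.
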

		\begin{proof}
			It is easy to see that $\cA\Vdash \mathsf{Alt}_n\vee \varphi$. Since  $\mathsf{Alt}_n$ locally corresponds to the property having out-degree at most $n$, the formula is satisfied under each valuation, under each node with finite degree. Similarly, $\varphi$ is locally valid on each node  having infinite out-degree. Nevertheless, $\cA^\ue\not\Vdash \mathsf{Alt}_n\vee \varphi$. Consider a node $w\in A$ with infinite out-degree,  such that for each $v\in R[w]$ the set $R[w]\cap R[v]$ is finite. Fix some $u\in \Uf^*(A)$ with $R^\ue(\pi_w,u)$.  Proposition \ref{prop:iso} implies that for no $v\in R[w]$ we can have $R^\ue(\pi_v,u)$, otherwise it would mean that $R[w]\cap R[v]$ is infinite. Thus there cannot be any finite path from any such $\pi_v$ to $u$. Consequently $(\ast)$ is not satisfied at $\pi_w$, hence $\varphi$ is falsified at $\pi_w$. Since $\pi_w$ has infinite out-degree, $\mathsf{Alt}_n$ is also falsified at $\pi_w$. Thus $\cA^\ue\not\Vdash \mathsf{Alt}_n \vee \varphi$, because $\varphi$ shares no common variables with $\mathsf{Alt}_n$.
		\end{proof}

	\end{example}
	
	Consequently, for each $\cA\in\mathsf{K}$ we have  $\cA\equiv\cA^\ue$, but $\Lambda(\cA)\neq \Lambda(\cA^\ue)$. It follows that $\varphi$ has no first-order correspondent, otherwise would be preserved under ultrafilter extensions (cf. \cite{Benthem}). It is natural to ask how   the modal theories are related to  ultrapowers, or the iterated application of the  extension.
	
	\begin{theorem}\label{thm:almost_modal}
		Let $\cA$ be an almost bounded structure,  $\cA^*$ be an ultrapower  over some $\aleph_1$-incomplete ultrafilter. Then  $\Lambda(\uf)=\Lambda((\uf)^\ue) =\Lambda(\cA^*)$.
	\end{theorem}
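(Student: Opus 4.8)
The plan is to squeeze the three logics between two standard bridges and then reduce the remaining content to a single transfer principle. Write $L_1=\Lambda(\uf)$, $L_2=\Lambda((\uf)^\ue)$ and $L_3=\Lambda(\cA^*)$. Two inclusions come for free. Since ultrafilter extensions reflect validity, i.e. $\Lambda(\cB^\ue)\subseteq\Lambda(\cB)$ for every $\cB$ (cf. \cite{black}), taking $\cB=\uf$ gives $L_2\subseteq L_1$. For the $\aleph_1$-incomplete ultrapower the trace map $[a]_\cD\mapsto\{X\subseteq A:\{i:a(i)\in X\}\in\cD\}$ is a bounded morphism $\cA^*\to\uf$, and (we may take $\cA^*$ regular as in Theorems \ref{thm:almost_bounded} and \ref{sat}, so that it is surjective; independence of $\Lambda(\cA^*)$ from the chosen $\aleph_1$-incomplete ultrafilter then follows a posteriori from the equalities) validity passes to bounded morphic images, giving $L_3\subseteq L_1$. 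Thus everything reduces to the two reverse inclusions $L_1\subseteq L_3$ and $L_1\subseteq L_2$, that is: every modal formula falsifiable on $\cA^*$, or on $(\uf)^\ue$, is already falsifiable on $\uf$.

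First I would record that $\uf$ is itself almost bounded: by Observation \ref{obs_degree} and Proposition \ref{prop:iso} the only nodes of infinite $R^\ue$-degree are the finitely many $\pi_w$ with $w\in R_\infty$. Hence Theorem \ref{thm:almost_bounded}, Lemma \ref{thm}, Lemma \ref{lemma:26} and Theorem \ref{sat} all apply with $\uf$ in place of $\cA$; in particular $\cA\preceq\uf$ and $\uf\preceq(\uf)^\ue$, and $\uf$ is at least $2^{2^{\aleph_0}}$-saturated. The engine for both reverse inclusions is then the following claim, proved by transporting a falsifying valuation: if $p$ is a point of $\cA^*$ (resp. of $(\uf)^\ue$), then there is $u\in\Uf(A)$ with $\tp(u)=\tp(p)$ and $\langle u\rangle^\omega_{S^\ue}\cong_P\langle p\rangle^\omega_{S^*}$. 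Indeed $\tp(p)=\{\chi_n:n\in\omega\}$ is finitely satisfiable in $\uf$---each $\chi_n$ holds in $\cA$ by $\cA\preceq\uf$ (resp. in $\uf$ by $\uf\preceq(\uf)^\ue$), and the $\chi_n$ form a decreasing chain---so it is realized by some $u$ by saturation of $\uf$, and the $P$-isomorphism of $\omega$-neighborhoods follows exactly as in Lemma \ref{lemma:26}(i).

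Given such a $u$, I would build the falsifying valuation on $\uf$ by recursion on the modal depth $d$ of $\varphi$. Its truth at $p$ depends only on the depth-$d$ neighborhood of $p$ under $R=S\cup P$; along $S$ this neighborhood is finite (the $S$-reduct is bounded, Observation \ref{obs_degree}), so the $P$-isomorphism carries the colours of the given valuation from the $S$-part of $p$ to the $S$-part of $u$. The only remaining edges are the $P$-edges into the finitely many named nodes $d_w$ ($w\in R_\infty$), shared by both structures, and the successors of those $d_w$. For each $d_w$ reached within depth $d$ I must reproduce, among the successors of $d_w$ in $\uf$, the finitely many colour-patterns that its successors carry at $p$; this is where the multiplicity statements of Lemma \ref{lemma:26}(ii)--(iii) and the $2^{2^{\aleph_0}}$-saturation of $\uf$ enter, guaranteeing, for every required successor-type, enough pairwise distinct successors of $d_w$ realizing it. Recursing downward in $d$ yields a valuation $V'$ on $\uf$ with $\langle\uf,V'\rangle,u\not\Vdash\varphi$, so $\varphi\notin\Lambda(\uf)$. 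Applying this to $\cA^*$ gives $L_1\subseteq L_3$, hence $L_1=L_3$; applying it to $(\uf)^\ue$ gives $L_1\subseteq L_2$, hence $L_1=L_2$, completing the proof.

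The main obstacle is precisely this valuation transfer through the infinite-degree nodes. A bounded morphism or an elementary embedding moves falsifications in one direction only, so the delicate point is to match the \emph{colours} of a falsifying valuation on both sides simultaneously; this is what forces the combined use of saturation (Theorem \ref{sat}) and of the exact counting in Lemma \ref{lemma:26}, since one must ensure that each colour-pattern occurring on the successors of a node $d_w$ recurs there with enough multiplicity for the level-by-level matching never to run out of witnesses. I expect the careful bookkeeping of this recursion---matching colour-patterns while respecting the $P$-connections to $R_\infty$---to be the only real difficulty; everything else rests on Proposition \ref{prop:iso}, Lemma \ref{thm} and Lemma \ref{lemma:26}. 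It is worth noting that saturation on the target side is genuinely necessary: by Example \ref{emp} one has $\cA\equiv\uf$ yet $\Lambda(\cA)\neq\Lambda(\uf)$, the difference being exactly the neighborhood types that an unsaturated $\cA$ omits but that $\uf$, being saturated, realizes.
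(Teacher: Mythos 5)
Your reduction has one component that is genuinely wrong and one that is under-specified at exactly the delicate point. The wrong component is the claim that for an \emph{arbitrary} $\aleph_1$-incomplete $\cD$ the trace map $[a]_\cD\mapsto\{X\subseteq A:\{i\in I:a(i)\in X\}\in\cD\}$ is a bounded morphism $\cA^*\to\uf$. It is a frame homomorphism, but the back condition fails for small ultrapowers, which the theorem explicitly covers. Take $\cA$ countable and almost bounded with $w\in R_\infty$, and let $\cD$ be a non-principal ultrafilter over $\omega$. Then $|A^*|\leq 2^{\aleph_0}$, so at most $2^{\aleph_0}$ ultrafilters occur as traces of elements of $A^*$; on the other hand the trace of the diagonal element $[c_w]_\cD$ is $\pi_w$, whose $R^\ue$-successors in $\uf$ are exactly the $2^{2^{\aleph_0}}$-many $v$ with $R[w]\in v$. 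Hence some $v$ with $R^\ue(\pi_w,v)$ is the trace of no element of $A^*$, and the back condition fails at $[c_w]_\cD$. The bounded-morphism route (as in the Goldblatt--Thomason argument) genuinely requires a countably incomplete, $2^{|A|}$-regular ultrafilter, and your parenthetical repair is circular: establishing the three equalities for one regular ultrapower says nothing about $\Lambda(\cA^*)$ for a small $\aleph_1$-incomplete one -- that independence is precisely what the theorem asserts. The paper avoids this entirely: both inclusions between $\Lambda(\uf)$ and $\Lambda(\cA^*)$ come from one symmetric bisimulation construction.

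Your ``engine'' is the right idea and is essentially the paper's construction in finite-depth form (match $P$-isomorphic $(S,\omega)$-neighbourhoods, pull the valuation back through the matching), but as described it secures only the diamond half at the infinite-degree nodes. Reproducing, with high multiplicity, the colour-patterns that successors of $d_w$ carry in $\<\cA^*,V\>$ does not prevent the \emph{other} successors of $\pi_w$ in $\uf$ -- there are up to $2^{2^{|A|}}$ of them, while your recursion designates only finitely many witnesses per level -- from exhibiting a pattern occurring nowhere among the successors of $[c_w]_\cD$; any such stray point falsifies at $\pi_w$ a box that holds at $[c_w]_\cD$. So the valuation on $\uf$ must be defined coherently on \emph{all} of $\Uf^*(A)$, and the possibility of doing so rests on a converse transfer you never state: every $(S^\ue,\omega)$-neighbourhood type occurring in $\uf$ must be realized, attached to the same constants, in $\cA^*$. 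This holds because a countably incomplete ultrapower is $\aleph_1$-saturated for the countable language $\{S,P,d_w\}_{w\in R_\infty}$ (so each countable type $\tp(v)$ is realized) together with Lemma \ref{lemma:26}(i), combined with the count that the ultrapower side has at most $2^{\aleph_0}$ valued neighbourhood classes against at least $2^{2^{\aleph_0}}$ copies of each type in $\uf$ (Lemma \ref{lemma:26}(ii)). Running these two transfers simultaneously is exactly the transfinite back-and-forth with the covering condition in the paper's proof; it cannot be compressed into ``enough pairwise distinct witnesses''. (Your preliminary observations -- that $\uf$ is itself almost bounded, which is what legitimizes applying the machinery to $(\uf)^\ue$, and the free inclusion $\Lambda((\uf)^\ue)\subseteq\Lambda(\uf)$ -- are correct.)
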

	\begin{proof} First,  note that $\aleph_1$-incomplete ultrafilters are $\aleph_0$-regular.
		The core idea is to count the possible valuations on the possibly different neighborhoods of the elements, in order to establish bisimulation between the structures.  We only  argue for $\Lambda(\uf)\subseteq\Lambda(\cA^*)$, the rest are similar with the obvious modifications.
		Suppose $\cA^*\not\Vdash \varphi$, for some modal formula $\varphi$. Thus there is a valuation $V$ and some element in $A^*$ under which $\varphi$ is falsified. In order to show $\cA^\ue\not\Vdash\varphi$, we construct a valuation $W$ on $\cA^\ue$ and a surjective bisimulation $Z$ in the sense of \cite[Definition 2.16]{black} between the models $\<\cA^\ue, W\>$ and  $\<\cA^*, V\>$. This will ensure that $\cA^\ue\not\Vdash\varphi$.  Consider an enumeration of the non-principal ultrafilters  $E = \{u_\beta: \beta< \lambda\}$ of $\cA^\ue$ and an enumeration of the non-diagonal elements  $F = \{[b_\beta]_\cD: \beta< \kappa\}$ of $\cA^*$. For $\delta < 2^{\aleph_0}$,  by a  back-and-forth argument we define  recursively $Z_\delta,Y_\delta\subseteq E$ and $T_\delta, X_\delta\subseteq F$, such that the following conditions are satisfied: 
		\begin{enumerate}\itemsep-2pt
			\item[($a$)] \textit{separation}: For each $u,v \in Y_\delta$ we have $\<u\>_{S^\ue}^\omega\neq\<v\>_{S^\ue}^\omega$, while $\<u\>_{S^\ue}^\omega\cong_{P}\<v\>_{S^\ue}^\omega$, and similarly for $X_\delta$,
			\item[($b$)] \textit{difference}: For all $\gamma < \delta$ and $u\in Y_\gamma$, $v\in Y_\delta$ we have $\<u\>_{S^\ue}^\omega\not\cong_{P}\<v\>_{S^\ue}^\omega$, and similarly for $X_\delta$,
			\item[($c$)] \textit{iso}: For each $u\in Y_\delta$ and $[b]_\cD\in X_\delta$ we have $\<u\>_{S^\ue}^\omega\cong_{P}\<[b]_\cD\>_{S^*}^\omega$,
			\item[($d$)] \textit{coherence}: There is a valuation  $V_{\delta} : \mathsf{Prop} \to \mathcal{P}\big(\bigcup_{v\in Y_{\delta}}\<v\>^\omega_{S^\ue}\big)$ such that for every $[d]_\cD\in X_{\delta}$ there is some $v\in Y_\delta$ with  submodels $\big\langle \<[d]_\cD\>^\omega_{S^*}, V\big\rangle \cong \big\langle \<v\>^\omega_{S^\ue}, V_\delta\big\rangle$,
			\item[($e$)] \textit{decreasing}: For $\gamma < \delta$ we have $Z_\delta \subseteq Z_\gamma$, and similarly $T_\delta \subseteq T_\gamma$,
			\item[($f$)] \textit{covering}: $\bigcup_{\delta<2^{\aleph_0}}\bigcup_{u\in X_\delta}\< u\>^\omega_{S^\ue} = \Uf^*(A)$.
		\end{enumerate}
		
		\noindent For $\delta=0$, we let $ X_0 = \emptyset  =Y_0$ and $Z_0= E$, $T_0 = F$.  For  $\delta$  limit,  we define $Z_\delta$ and $T_\delta$ to be the intersection of the previous stages. For technical reasons only, we set $Y_\delta = X_\delta = \emptyset$ and $V_\delta$ valuates to emptyset, at the limit case.  Suppose that for some $\delta < 2^{\aleph_0}$, where $\delta = \gamma + 2n$, we have already constructed the sets.\\
		
		\noindent \textsc{Forth (constructing $X_{\delta+1}$):} Let   $[d]_\cD\in T_\delta$ be the least  non-diagonal element from the ordering  satisfying the condition: for all $[a]_\cD\in \bigcup_{\gamma \leq\delta}X_\gamma$ we have $[d]_\cD\not\in \<[a]_\cD\>^\omega_{S^*}$.  Then $[d]_\cD$ generates an $(S^*,\omega)$-neighbourhood that was not encountered before. Let $X_{\delta+1}$ be the maximal set with the property: For all $[a]_\cD \in X_{\delta+1}$ we have $\<[a]_\cD\>_{S^*}^\omega\cong_P\<[d]_\cD\>_{S^*}^\omega$, while for all  $[a]_\cD,[b]_\cD \in X_{\delta+1}$, $\<[a]_\cD\>_{S^*}^\omega\neq\<[b]_\cD\>_{S^*}^\omega$, that is its  elements generate pairwise disjoint $(S^*,\omega)$-neighborhoods that are $P$-isomorphic to $\<[d]_\cD\>_{S^*}^\omega$. This can be obtained e.g. by transfinite recursion over $\kappa$. Thus $(a)-(b)$ holds.   Define the  equivalence relation on $\bigcup_{[a]_\cD\in X_{\delta+1}}\<[a]_\cD\>^\omega_{S^*}$ as
		\begin{align}
			[e]_\cD \sim [f]_\cD  &\Leftrightarrow
			\begin{cases}
				\Big(\exists[b]_\cD, [a]_\cD \in X_{\delta+1} \Big)\Big( 	
				[e]_\cD\in \<[b]_\cD\>^\omega_{S^*}  \text{ and } [f]_\cD\in\<[a]_\cD\>^\omega_{S^*}\Big) \\ 
				\text{} [e]_\cD\mapsto [f]_\cD \text{ and }
				\big\langle \<[b]_\cD\>^\omega_{S^*}, V\big\rangle \cong \big\langle \<[a]_\cD\>^\omega_{S^*}, V\big\rangle, 
			\end{cases}
		\end{align}
		
		\noindent where $[e]_\cD\mapsto [f]_\cD$ is the map given by the $P$-isomorphism between $\<[b]_\cD\>^\omega_{S^*}$  and $\<[a]_\cD\>^\omega_{S^*}$. Thus $[e]_\cD$ and $[f]_\cD$ belong to those $(S^*, \omega)$-neighbourhoods that are $P$-isomorphic to each other, moreover  the components are isomorphic as submodels given by $V$.  Observe that the number of different equivalence classes is at most $2^{\aleph_0}$. \\

		\noindent	\textsc{(Constructing $Y_{\delta+1}$):} This case is symmetric to the previous one. By Lemma \ref{lemma:26}  there is some non-principal $v\in \Uf^*(A)$ such that $\langle [d]_\cD\rangle^\omega_{S^*} \cong_{P} \langle v\rangle^\omega_{S^\ue}$. By assumption, for each $\gamma\leq \delta$,   conditions  ($a$) -- ($c$) are satisfied, hence  $v\not\in \bigcup_{\gamma \leq\delta}Y_\gamma$.  Thus, similarly to $X_{\delta+1}$ we can define  $Y_{\delta+1}$ to be the maximal set with the property:  For all $u \in Y_{\delta+1}$ we have $\<u\>_{S^*}^\omega\cong_P\<v\>_{S^*}^\omega$, while for all  $u,w \in Y_{\delta+1}$, $\<u\>_{S^*}^\omega\neq\<w\>_{S^*}^\omega$. The construction so far satisfies ($a$) -- ($c$).\\

		\noindent \textsc{(Constructing $V_{\delta + 1}$):} Since $|Y_{\delta+1}| \geq 2^{2^{\aleph_0}}$ and $|X_{\delta+1}/\!\sim| \leq 2^{\aleph_0}$, fix is a surjection $g: Y_{\delta+1}\to {X_{\delta+1}/\sim}$. For each $z\in Y_{\delta+1}$  fix some representative $[f_z]_\cD\in g(z)$ and a $P$-isomorphism $\eta_z: \<z\>^\omega_{S^\ue}\to \<[f_z]_\cD\>^\omega_{S^*}$ with $\eta_z(z) = [f_z]_\cD$. Define the valuation  $V_{\delta+1}$ to be:
		\begin{align}
			v\in V_{\delta+1}(p) \Leftrightarrow \eta_z(v)\in V(p),\label{valuation}
		\end{align}
		where $z\in Y_{\delta+1}$ and $v\in \<z\>^\omega_{S^\ue}$. It is easy to see that $V_{\delta+1}$ satisfies ($d$). Finally, we let 
		\begin{align}\label{Z,T}
			\begin{split}
				Z_{\delta+1} &= Z_\delta\setminus \bigcup_{u\in X_{\delta+1}}\< u\>^\omega_{S^\ue},\\
				T_{\delta+1}&=	T_\delta \setminus\bigcup_{[a]_\cD\in Y_{\delta+1}}\< [a]_\cD\>^\omega_{S^*},
			\end{split}
		\end{align}
		thus ($e$) holds.\\
		
		\noindent \textsc{Back (constructing $\delta+2$):} We can simply mirror the argument above. Consider the first $v$ from the enumeration of $Z_{\delta+1}$ such that  for every  $z \in \bigcup_{\gamma \leq\delta+1}X_\gamma$ we have $v\not\in \<z\>^\omega_{S^\ue}$.   Thus $v$ has an $(S^\ue,\omega)$-neighbourhood that was not encountered before. We may define $Y_{\beta+2}$ just how we defined $X_{\beta+1}$. Then by Lemma \ref{lemma:26}  we can find some element $[b]_\cD$ such that $\<[b]_\cD\>^\omega_{S^*}\cong_{P}\<v\>^\omega_{S^\ue}$ with $[b]_\cD\not\in \bigcup_{\gamma \leq\delta+1}\in Y_\gamma$. Similarly to $Y_{\delta+1}$ we can construct $X_{\delta+2}$, also  $V_{\delta+2}$ can be done by ``pulling back'' the valuation from $V$, just as in (\ref{valuation}). Finally, $Z_{\delta+2}$ and $T_{\delta+2}$ are given in the same way as in (\ref{Z,T}). It is clear that condition ($f$) will hold.

		Next, let us  define the valuation $W$ on $\cA^\ue$ as:
		\begin{align}
			W(p)  = \bigcup_{\delta <2^{\aleph_0}}V_\delta(p) \cup (A\cap V(p)),
		\end{align}
		where $A\cap V(p)$ is  added  to copy the valuation of the diagonal elements. Finally, define the surjective bisimulation $Z$ between $\<\cA^\ue, W\>$ and $\<\cA^*, V\>$  as:
		\begin{align}
			u Z [a]_\cD \Leftrightarrow 
			\begin{cases}(\exists v\in Y_\delta)(\exists [b]_\cD\in X_\delta) (u\in \<v\>^\omega_{S^*} \text{ and } [a]_\cD\in \<[b]_\cD\>_{S^*}^\omega)\\
				\text{such that }  \eta_v(u)\sim [a]_\cD,
			\end{cases}
		\end{align}
		for some $\delta<2^{\aleph_0}$. Surjectivity of  $Z$ implies  $\cA^\ue\not\Vdash \varphi$.  The inclusion $\Lambda(\cA^*)\subseteq \Lambda(\uf)$ can be shown by a completely analogous fashion.
	\end{proof}
	
	\begin{corollary}\label{cor}
		Suppose $\cA$ and $\cB$ are almost bounded structures. If $\cA\equiv \cB$, then $\Lambda(\cA^\ue)=\Lambda(\cB^\ue)$. 
	\end{corollary}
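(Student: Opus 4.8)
The plan is to combine the two main theorems already established for almost bounded structures, namely Theorem~\ref{thm:almost_modal} and Theorem~\ref{cor:elementary}, and let elementary equivalence pass through an auxiliary structure in which they both apply. First I would recall that by Theorem~\ref{thm:almost_modal}, for \emph{any} almost bounded $\cA$ and any ultrapower $\cA^*$ taken over an $\aleph_1$-incomplete ultrafilter, we have $\Lambda(\cA^\ue) = \Lambda(\cA^*)$. The natural strategy is therefore to reduce the statement about $\Lambda(\cA^\ue)$ and $\Lambda(\cB^\ue)$ to a statement about the modal logics of suitable ultrapowers of $\cA$ and $\cB$, where the hypothesis $\cA \equiv \cB$ can be exploited directly through the Keisler--Shelah Isomorphism Theorem.

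The key steps, in order, are as follows. Since $\cA \equiv \cB$, the Keisler--Shelah theorem furnishes an ultrafilter $\cD$ over some index set $I$ such that the ultrapowers $\cA^* = \cA^I/\cD$ and $\cB^* = \cB^I/\cD$ are isomorphic; without loss of generality I would take $\cD$ to be $\aleph_1$-incomplete (equivalently, non-principal over an infinite set), which is automatic for the regular ultrafilters Keisler--Shelah produces. Isomorphic structures have identical modal logics, so $\Lambda(\cA^*) = \Lambda(\cB^*)$. Now applying Theorem~\ref{thm:almost_modal} to $\cA$ with the ultrapower $\cA^*$ gives $\Lambda(\cA^\ue) = \Lambda(\cA^*)$, and applying it to $\cB$ with $\cB^*$ gives $\Lambda(\cB^\ue) = \Lambda(\cB^*)$. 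Chaining these equalities,
\begin{align*}
\Lambda(\cA^\ue) = \Lambda(\cA^*) = \Lambda(\cB^*) = \Lambda(\cB^\ue),
\end{align*}
which is exactly the desired conclusion.

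The only genuine care required is in the choice of the ultrafilter: Theorem~\ref{thm:almost_modal} demands an $\aleph_1$-incomplete ultrafilter, whereas Keisler--Shelah only guarantees some ultrafilter realizing the isomorphism. I expect this to be the main (and in fact minor) obstacle, since it must be checked that the ultrafilter delivered by Keisler--Shelah can be taken to satisfy the incompleteness hypothesis. This is standard: the regular ultrafilters used in the modern proof of Keisler--Shelah are $\aleph_1$-incomplete (indeed $\aleph_0$-regular, as noted at the start of the proof of Theorem~\ref{thm:almost_modal}), so the same $\cD$ serves both invocations of the modal-logic transfer theorem. No new combinatorial or bisimulation argument is needed beyond what is already packaged inside Theorem~\ref{thm:almost_modal}; the corollary is purely a concatenation of prior results once the ultrafilter is chosen compatibly.
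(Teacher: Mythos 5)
Your proposal is correct and follows essentially the same route as the paper: invoke Keisler--Shelah to obtain isomorphic ultrapowers $\cA^* \cong \cB^*$, then chain $\Lambda(\cA^\ue)=\Lambda(\cA^*)=\Lambda(\cB^*)=\Lambda(\cB^\ue)$ via Theorem~\ref{thm:almost_modal}. Your additional check that the Keisler--Shelah ultrafilter can be taken $\aleph_1$-incomplete (being regular over an infinite index set) is a point the paper leaves implicit, and is a welcome bit of extra care rather than a deviation.
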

	\begin{proof}
		If $\cA\equiv \cB$,  by the Keisler--Shelah Isomorphism Theorem we  find isomorphic ultrapowers $\cA^*$ and $\cB^*$. Hence by Theorem \ref{thm:almost_modal}  we get $\Lambda(\cA^\ue)=\Lambda(\cA^*)=\Lambda(\cB^*) =\Lambda(\cB^\ue)$.
	\end{proof}

	\subsection{Further structural and combinatorial properties}
	
	After all, it is natural to ask how far the similarity can be pushed  between the ultrafilter extension and an ultrapower. Here we have an immediate result.

	\begin{theorem}\label{sat}
		Let $\cA$ be an almost bounded structure,  $\cA^*$ be an ultrapower over some $\kappa$-regular ultrafilter.  Then  $\cA^\ue$ is at least  $2^{2^{\aleph_0}}$, while $\cA^*$ is at least  $2^\kappa$-saturated.
	\end{theorem}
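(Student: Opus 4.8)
The plan is to derive saturation from the abundance of pairwise disjoint copies of each local configuration provided by Lemma \ref{lemma:26}, after first passing to the decomposed structures. My initial step would be to record that, by Proposition \ref{prop:iso} and the naming constants $d_w$ ($w\in R_\infty$), the relations $S$ and $P$ are definable from $R$ and the $d_w$: every $P^\ue$-edge touches one of the finitely many principal ultrafilters $\pi_w$ with $w\in R_\infty$ (and such $\pi_w$ carry no $S^\ue$-edges by Observation \ref{obs_degree}), so $P^\ue(x,y)$ is equivalent to $R^\ue(x,y)\wedge\big(\bigvee_{w\in R_\infty}(x=d_w\vee y=d_w)\big)$, with $S^\ue=R^\ue\setminus P^\ue$, and symmetrically in $\cA^*$. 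Together with the translation $(\cdot)^\sharp$ of Proposition \ref{lem_transfer} this makes $L_{\{R,d_w\}}$ and $L_{\{S,P,d_w\}}$ mutually interpretable on these structures, so it suffices to prove saturation of $\cA^\flat$ and $\cA^\sharp$ as $\{S,P,d_w\}$-structures and then transfer back through Proposition \ref{prop_transfer}.

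Next I would pin down the shape of complete $1$-types. Since $\langle A,S\rangle$ and $\langle\Uf(A),S^\ue\rangle$ are bounded (Observation \ref{obs_degree}), the $S$-reducts have globally bounded degree, so each element has finitely many $S$-neighbours and every $S$-ball of fixed radius is finite. Using Lemma \ref{thm} and the formulas $\chi_n$ defining $\tp(u)$, the \emph{generic} part of the type of an element is completely governed by the $P$-isomorphism type of its $\omega$-neighbourhood $\langle x\rangle^\omega_S$ together with its $P$-links to the finitely many $d_w$. The decisive point is that there are at most $2^{\aleph_0}$ such $P$-isomorphism types, since a bounded-degree $\omega$-neighbourhood is a countable rooted configuration and $R_\infty$ is finite.

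The realization argument is then driven purely by cardinality. Let $B$ be a parameter set with $|B|<2^{2^{\aleph_0}}$ in $\cA^\flat$ (respectively $|B|<2^\kappa$ in $\cA^\sharp$) and let $p(x)$ be finitely satisfiable. If $p$ pins $x$ to within a fixed finite $S$-distance of some parameter or some $d_w$, then by bounded degree $p$ implies a finite disjunction $\bigvee_{i\le m}x=c_i$ over elements already present; a routine finite-satisfiability/pigeonhole argument (if each $c_i$ failed some $\psi_i\in p$, the finite subset $\{\psi_0,\dots,\psi_m,\bigvee_i x=c_i\}$ would be unsatisfiable) shows one $c_i$ already realizes $p$. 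Otherwise $x$ can be taken $S$-far from $B$, so $p$ reduces to a single $\omega$-neighbourhood $P$-isotype $\tau$; since $p$ is consistent, $\tau$ is realized, and by Lemma \ref{lemma:26}(ii) (respectively (iii)) it has at least $2^{2^{\aleph_0}}$ (respectively $2^\kappa$) pairwise $S$-disjoint realizations. As each element of $B$ meets at most one of these disjoint copies and $|B|$ is strictly below their count, a copy disjoint from $B$ remains, and its root realizes $p$. This is exactly why the bounds come out as $2^{2^{\aleph_0}}$ and $2^\kappa$.

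The step I expect to be the main obstacle is making the above classification of $1$-types exhaustive, specifically the types that tie $x$ to a parameter through the $S$-\emph{component} structure without fixing a finite distance, e.g. ``$x$ lies in $\langle b\rangle$'s component but at $S$-distance $>n$ for every $n$'', which is consistent once components have infinite diameter. Handling these requires showing that finite satisfiability, together with \L o\'s's Lemma in $\cA^\sharp$ and the internal structure of the extension in $\cA^\flat$, forces genuine realizations at infinite distance of the prescribed $\tau$, so that they too fall under the disjoint-copies count; checking this exhaustiveness, and verifying the standard reduction of $n$-type saturation to the $1$-type case, is the delicate bookkeeping I would need to complete in full.
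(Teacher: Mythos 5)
Your strategy is sound and genuinely different from the paper's. The paper does not classify types at all: it invokes Theorem \ref{thm:almost_bounded} to get $\cA^\ue\preceq\cA^*$ for a suitably saturated regular ultrapower, realizes the given type $p$ there by an element $[a]_\cD$, uses Lemma \ref{lemma:26} to find $u\in\Uf(A)$ with $\<u\>^\omega_{S^\ue}\cong_P\<[a]_\cD\>^\omega_{S^*}$ suitably placed relative to the finitely many parameters of each formula, and transfers each formula of $p$ back to $u$ by an Ehrenfeucht--Fra\"iss\'e game. Your proof replaces the detour through the elementary extension by a direct analysis of complete $1$-types: the algebraic (near) case by pigeonhole in a finite $S$-ball, the generic (far) case by the disjoint-copies count of Lemma \ref{lemma:26}(ii)/(iii); what the EF game does for the paper, Gaifman--Hanf locality does for you. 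Note that the locality argument must be run in the language where the $P$-edges are recoded as finitely many unary predicates (links to the $d_w$), exactly as your interdefinability remark permits --- otherwise the Gaifman graph contains the $P$-edges and all distances collapse through the hubs $d_w$. Your route buys independence from Theorem \ref{thm:almost_bounded}; the paper's buys brevity, outsourcing the classification of types to the saturation of the ultrapower.

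Two corrections. First, the step you flag as the main obstacle is not one: ``$x$ lies in $b$'s component'' is not first-order expressible, and by the very locality theorem you invoke, any element with the prescribed $\omega$-neighbourhood isotype lying at $S$-distance $>2r$ from all parameters satisfies the same formulas of locality rank $r$ over them; hence there are no residual ``component-tying'' complete types, and your near/far dichotomy (best refined to: some formula in $p$ has finitely many solutions, or none does) is exhaustive. Second, a step you treat as free does need an argument: ``since $p$ is consistent, $\tau$ is realized.'' To apply Lemma \ref{lemma:26}(ii) you need $\tau$ realized as the $\omega$-neighbourhood of a \emph{non-diagonal element of an ultrapower}, whereas consistency of $p$ only gives each finite-radius approximation $\tau_n$ inside $\cA^\flat$. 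You must push each $\tau_n$ down into $A$ by Lemma \ref{thm} (the sets $V_{\tau_n}=\{w\in A:\<w\>^n_S\cong_P\tau_n\}$ are nonempty and decreasing) and then either realize the countable type $\{\chi_{\tau_n}:n\in\omega\}$ at a non-diagonal element using $\aleph_0$-regularity of the ultrafilter, or build an ultrafilter $u\supseteq\{V_{\tau_n}:n\in\omega\}$ directly and apply Lemma \ref{thm} once more to conclude that $u$ has isotype $\tau$ (this is in effect what Proposition \ref{prop:3.4.6} does). With that step supplied, and the standard reduction of $n$-types to $1$-types, your proof goes through.
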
		
	\begin{proof}
		Let $X\in [\Uf(A)]^{<2^{2^{\aleph_0}}}$ and fix a type $p\in \mathcal{S}^{\cA^\ue}(X)$. Take a suitably saturated ultrapower $\cA^*$ realizing $p$, such that $\cA^\ue\preceq \cA^*$. Let $[a]_\cD\in \cA^*$ realizing $p$.   By Lemma \ref{lemma:26}  we can find an element $u\in \Uf(A)$ with $\<[a]_\cD\>^\omega_{S^*}\cong_P \<u\>^\omega_{S^\ue}$ and by Proposition \ref{lem_transfer} we have that $\cA^\sharp\models p^\sharp([a]_\cD)$.  Let $\varphi^\sharp\in p^\sharp$  with quantifier depth $n$ and consider $c_1,\dots, c_k\in X$ occuring in $\varphi^\sharp$.  Using Lemma \ref{lemma:26}, it is easy to show that player II can always win the $n$-round Ehrenfeucht–Fraïssé game between $\<\cA^\ue, u, c_0,\dots, c_k\>$ and $\<\cA^\sharp, [a]_\cD, c_0,\dots, c_k\>$. As a consquence $\cA^\flat\models p^\sharp(u)$, hence by Lemma \ref{lem_transfer} $\cA^\ue\models p(u)$, as desired. Similar argument applies for the ultrapower.
	\end{proof}
	\noindent From the uniqueness of saturated structures (cf. \cite{chang}, Theorem 5.1.13) it follows that:
	\begin{corollary}
		If $\cA$ is  countable, almost bounded, then $\cA^*\cong \cA^\ue$, for any regular ultrapower with cardinality $|A^*|= |\Uf(A)|$. 
	\end{corollary}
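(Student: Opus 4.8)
The plan is to deduce the isomorphism from the classical uniqueness theorem for saturated models (\cite{chang}, Theorem 5.1.13): two elementarily equivalent saturated structures of the same cardinality are isomorphic. Accordingly, three things must be checked: that $\cA^\ue$ and $\cA^*$ have the same cardinality, that both are saturated at that cardinality, and that $\cA^\ue\equiv\cA^*$.

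First I would settle the cardinal arithmetic. Since $\cA$ is countable, $|\Uf(A)| = 2^{2^{\aleph_0}}$, so $|\cA^\ue| = 2^{2^{\aleph_0}}$, and by hypothesis $|A^*| = 2^{2^{\aleph_0}}$ as well. Writing $\cA^*$ as an ultrapower modulo a regular ultrafilter $\cD$ over an index set $I$, regularity means $\cD$ is $|I|$-regular; for a countable $\cA$ one computes $|A^*| = \aleph_0^{|I|} = 2^{|I|}$, so the requirement $|A^*| = 2^{2^{\aleph_0}}$ forces $2^{|I|} = 2^{2^{\aleph_0}}$, and we may take $|I| = 2^{\aleph_0}$, whence $\cD$ is $2^{\aleph_0}$-regular.

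Next I would invoke Theorem \ref{sat} to obtain saturation. It gives directly that $\cA^\ue$ is at least $2^{2^{\aleph_0}}$-saturated, and, applied with $\kappa = 2^{\aleph_0}$, that $\cA^*$ is at least $2^{2^{\aleph_0}}$-saturated. Since each structure has cardinality exactly $2^{2^{\aleph_0}}$, being at least $2^{2^{\aleph_0}}$-saturated is precisely being saturated in the usual sense. Elementary equivalence is then immediate: by Theorem \ref{thm:almost_bounded} we have $\cA\preceq\cA^\ue$ (a $2^{\aleph_0}$-regular ultrapower of $\cA$ exists, so this embedding is established unconditionally), while $\cA\preceq\cA^*$ holds by \L o\'s's Theorem via the diagonal map; hence $\cA^\ue\equiv\cA\equiv\cA^*$. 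Applying the uniqueness theorem yields $\cA^\ue\cong\cA^*$.

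The argument is essentially bookkeeping layered on top of the two substantial results already established, Theorem \ref{sat} and Theorem \ref{thm:almost_bounded}. The only point requiring genuine care, and hence the main obstacle, is the cardinal-arithmetic matching: one must confirm that the regularity degree forced by $|A^*| = |\Uf(A)|$ is large enough (namely $\geq 2^{\aleph_0}$) for Theorem \ref{sat} to push the saturation of $\cA^*$ all the way up to its own cardinality $2^{2^{\aleph_0}}$, so that both structures are genuinely saturated rather than merely highly saturated below their size.
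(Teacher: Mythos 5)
Your proposal is correct and is essentially the paper's own argument: the paper derives this corollary exactly as you do, combining the saturation of both structures from Theorem \ref{sat}, the elementary equivalence through $\cA\preceq\cA^\ue$ and \L o\'s's Theorem, and the uniqueness theorem for saturated models of the same cardinality. The one small slip is the claim that ``we may take $|I|=2^{\aleph_0}$'': since the corollary quantifies over \emph{any} regular ultrapower of the stated cardinality, $I$ is given rather than chosen, and without GCH the equation $2^{|I|}=2^{2^{\aleph_0}}$ does not force $|I|=2^{\aleph_0}$; however, this is harmless, because applying Theorem \ref{sat} with $\kappa=|I|$ yields that $\cA^*$ is $2^{|I|}$-saturated, while regularity gives $|A^*|=\aleph_0^{|I|}=2^{|I|}=2^{2^{\aleph_0}}$, so $\cA^*$ is saturated in its own cardinality and the rest of your argument goes through unchanged.
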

	\noindent Nevertheless, isomorphism cannot be achieved for larger cardinalities, in general. 	
	\begin{example}\label{ex:2} Consider the structure
		$\cA=\< A \uplus B, R\>$, where $|A| = \aleph_0 < |B|$, and $R = id_{A}$. Consider any regular ultrapower $\cA^*$ such that $|\Uf(A\uplus B)| = |(A\uplus B)^*|$.  Lemma \ref{lemma:26} (iii) implies that $\cA^*$ has  $2^{2^{|B|}}$ reflexive points, whereas $\cA^\ue$ has ''only'' $2^{2^{\aleph_0}}$ reflexive points, by Observation \ref{obs_degree} (ii).
	\end{example}
	
	\noindent From this, it  follows that whenever $|A|>\aleph_0$,  $\cA^\ue$  is not necessarily $(2^{2^{\aleph_0}})^+$-saturated.  This  leads us to the following analysis. By $\#\< u\>^\omega_{S^\ue}$ let us abbreviate the cardinality of the set $$ \{v\in \Uf(A): \<u\>^\omega_{S^\ue}\cong_P \<v\>^\omega_{S^\ue}\}.$$ If $|V_n| = \lambda_n$, where $V_n = \{w\in A: \<u\>^n_{S^\ue}\cong_P \<w\>^n_{S}\}$, then $\#\<u\>^n_{S^\ue} = 2^{2^{\lambda_n}}$ by Theorem \ref{lemma:26}, thus $\#\<u\>^\omega_{S^\ue} \leq 2^{2^{\lambda_n}}$. Since $n<m$ implies $\lambda_m \leq \lambda_n$, there is an $m\in \omega$ such that $ \lambda_m = \min \{\lambda_n: n\in \omega\}$. Hence, there is an infinite descending chain $V_{k+1} \subseteq V_{k}$, $k\geq m$ such that $|V_{k}| =  \lambda_m$. We end this section by a combinatorial observation on the possible isomorphic neighbourhoods.

	\begin{proposition}\label{prop:3.4.6}
		For $u\in \Uf(A)$, if $\lambda_n=|V_n|\geq \aleph_0$, for every $n\in \omega$, then $\#\<u\>^\omega_{S^\ue}= 2^{2^{\lambda_m}}$, where $\lambda_m = \min\{\lambda_n: n\in \omega\}$.
	\end{proposition}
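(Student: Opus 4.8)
The plan is to reduce the statement to a counting problem about ultrafilters and then supply the two inequalities separately. First I would record the identity
\[
\{v\in\Uf(A):\<u\>^\omega_{S^\ue}\cong_P\<v\>^\omega_{S^\ue}\}=\{v\in\Uf(A):V_n\in v\text{ for all }n\in\omega\}.
\]
The inclusion ``$\subseteq$'' is immediate, and for ``$\supseteq$'' one glues the coherent sequence of finite $P$-isomorphisms witnessed at each level: since $\<A,S\>$ and $\<\Uf(A),S^\ue\>$ are bounded (Observation \ref{obs_degree}), every $\<v\>^n_{S^\ue}$ is finite, so a chain of $P$-isomorphisms of the $n$-neighbourhoods assembles into a $P$-isomorphism of the $\omega$-neighbourhoods exactly as in Lemma \ref{lemma:26}(i). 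The per-level identification ``$\<v\>^n_{S^\ue}\cong_P\<u\>^n_{S^\ue}$ iff $V_n\in v$'' is the content already used in the discussion preceding the statement (via Lemma \ref{thm}). Granting this, $\#\<u\>^\omega_{S^\ue}$ is exactly the number of ultrafilters extending the filter generated by the chain $\{V_n\}$.

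The upper bound is then free: every such $v$ contains $V_m$, a set of size $\lambda_m$, and there are at most $|\Uf(V_m)|=2^{2^{\lambda_m}}$ ultrafilters on $A$ containing a fixed set of that size. For the lower bound I would generalise the construction of Lemma \ref{lemma:26}(ii), where a countable witness set produced $2^{2^{\aleph_0}}$ ultrafilters, to a witness set of size $\lambda_m$. Work with the stabilised tail $T_n:=V_{m+n}$, a decreasing chain of sets each of cardinality $\lambda:=\lambda_m\geq\aleph_0$, and aim to produce an independent family $\{E_\xi:\xi<2^\lambda\}$ on $T_0$ all of whose finite Boolean combinations meet every $T_n$. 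Given such a family, for each $\epsilon\colon 2^\lambda\to 2$ the collection $\{E_\xi^{\epsilon(\xi)}:\xi<2^\lambda\}\cup\{T_n:n\in\omega\}$ (with $E_\xi^1=E_\xi$, $E_\xi^0=T_0\setminus E_\xi$) has the finite intersection property, since a finite subfamily reduces to a single Boolean combination $B$ and a single tail $T_N$ with $B\cap T_N\neq\emptyset$; hence it extends to an ultrafilter $v_\epsilon$ containing every $V_n$. Distinct $\epsilon$ differ on some $E_\xi$ and so yield distinct $v_\epsilon$, giving $2^{2^\lambda}$ such ultrafilters and the matching lower bound.

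The heart of the argument, and the step I expect to be the main obstacle, is building the filter-respecting independent family. My plan is to transport a classical independent family $\{A_\xi:\xi<2^\lambda\}$ on $\lambda$ through a surjection $g\colon T_0\to\lambda$ chosen so that $g^{-1}(\alpha)\cap T_n\neq\emptyset$ for every $\alpha<\lambda$ and every $n$; such a $g$ is obtained by a transfinite recursion picking pairwise distinct witnesses $p_{\alpha,n}\in T_n$ and setting $g(p_{\alpha,n})=\alpha$, which is possible because each $T_n$ still has full size $\lambda$. Setting $E_\xi:=g^{-1}(A_\xi)$, every Boolean combination equals $g^{-1}(C)$ for a nonempty combination $C$ of the $A_\xi$, and choosing any $\alpha\in C$ shows $p_{\alpha,n}\in g^{-1}(C)\cap T_n$, so the combination meets every tail; independence is inherited from $\{A_\xi\}$ since $g^{-1}$ preserves Boolean operations and nonemptiness. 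Combining the two bounds yields $\#\<u\>^\omega_{S^\ue}=2^{2^{\lambda_m}}$. The delicate point worth stressing is that a naive pseudo-intersection argument fails when $\bigcap_n V_n$ is small (for instance $\lambda=\aleph_0$ with finite intersection, or uncountable $\lambda$ of uncountable cofinality), which is exactly why the combinations must be \emph{forced} to meet the tails cofinally through $g$, rather than made to sit inside a single large piece.
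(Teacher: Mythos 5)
Your proof is correct, and its overall skeleton matches the paper's: both arguments reduce the problem to counting the ultrafilters that contain every $V_n$ (the paper phrases this as ``any extension realizes $\tp(u)$''), both get the upper bound from the single set $V_m$, and both get the lower bound by producing an independent family of size $2^{\lambda_m}$ whose finite Boolean combinations meet every member of the chain $\{V_n\}$, then invoking the finite intersection property. Where you genuinely diverge is in how that chain-compatible independent family is built. The paper constructs pairwise disjoint sets $Z_k\subseteq V_k$ of size $\lambda_m$ by a halving recursion, places a Hausdorff family $\mathcal{H}_k$ on each $Z_k$, and glues these diagonally via bijections $f_i^j:\mathcal{H}_i\to\mathcal{H}_j$ so that every combination leaves a nonempty trace in every $Z_k$. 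You instead pull back a single independent family on $\lambda_m$ through a surjection $g:V_m\to\lambda_m$ whose fibers meet every tail $T_n$; your witness sets $\{p_{\alpha,n}:\alpha<\lambda_m\}$ are implicitly the paper's $Z_n$, so the combinatorial content is the same, but your version dispenses with the gluing bijections and is actually tighter on a delicate point: the paper's recursion as written only guarantees $Z_{k+1}\subseteq V_k$ (not $Z_{k+1}\subseteq V_{k+1}$, which its own FIP step needs, and which requires an extra intersection with $V_{k+1}$ to repair), whereas your requirement $p_{\alpha,n}\in T_n$ builds the needed containments in from the start. Your explicit identity $\{v\in\Uf(A):\<u\>^\omega_{S^\ue}\cong_P\<v\>^\omega_{S^\ue}\}=\{v\in\Uf(A):(\forall n)\,V_n\in v\}$ is also a clean way of stating what the paper leaves implicit. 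One step to state more carefully, though it sits at the same level of detail as the paper's last sentence: in the $\supseteq$ direction, level-by-level isomorphisms $\<u\>^n_{S^\ue}\cong_P\<v\>^n_{S^\ue}$ do not automatically cohere into a chain; you should invoke K\H{o}nig's lemma on the tree of $P$-isomorphisms ordered by restriction, which is finitely branching precisely because the neighbourhoods are finite by Observation \ref{obs_degree}, before taking the union.
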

	
	\begin{proof}
		Fix $u\in \Uf(A)$. By the remarks above $\#\<u\>^\omega_{S^\ue} \leq 2^{2^{\lambda_m}}$,  we show $2^{2^{\lambda_m}}\leq \#\<u\>^\omega_{S^\ue}$.     For each $m\leq k\neq \ell\in \omega$ we define the  sets $S_k, Z_k \subseteq V_k$ with the following properties:
		\begin{enumerate}\itemsep-2pt
			\item[(i)]  $|Z_k|=|S_k|= \lambda_m$,
			\item[(ii)] $Z_k\cap Z_\ell=\emptyset$. 
		\end{enumerate}
		Consider a two element partition $V_m = V_{m,0} \uplus V_{m,1}$, where $|V_{m,i}| = \lambda_m$.  At least one block, say $V_{m,0}$, satisfies the property: For all $\ell> m$ we have that $|V_\ell\cap V_{m,0}| = \lambda_m$.   Let $S_{m} =  V_{m,0}$ and $Z_m = V_{m,1}$. The recursive steps are similar: If $S_{k}$ has been defined,  let $S_{k} = V_{k,0}\uplus V_{k,1}$ be a partition with $|V_{k, i}| = \lambda_m$ and  say $V_{k,1}$ be the block with $|V_\ell\cap V_{k, 1}|=\lambda_m$, for  all $\ell\geq k$. Fix $S_{k+1}= V_{k,1}$ and $Z_{k+1}= V_{k,0}$.
		
		By Hausdorff's theorem, there is an independent family of sets $\mathcal{H}_k$ over each $Z_k$ with cardinality $2^{\lambda_m}$. For each $m\leq i,j\in \omega$ we fix a bijection  $f_i^j:\mathcal{H}_i\to \mathcal{H}_j$.  For a fixed $i\geq m$ the set $\mathcal{H} =\{\bigcup_{j\in\omega\setminus m} {f_i^j}[X]: X\in \mathcal{H}_i\}$ is an independent family of sets of cardinality $2^{\lambda_m}$ over $\biguplus_{m\leq k\in\omega}Z_k$. For  $f\in 2^{\mathcal{H}}$ we let
		\begin{align}
			\hat f(X)=\begin{cases}X,&f(X)=1,\\
				\biguplus_{m\leq k\in \omega}Z_k\setminus X,&f(X)=0.\end{cases}
		\end{align}
		
		\noindent By standard arguments,  for all $f\in 2^\cH$ the set
		\begin{align}
			v^*_f =\bigl\{\bigcap_{X\in \mathcal{G}} \hat f(X): \mathcal{G}\subseteq \mathcal{H} \text{ is finite}\bigr\} \cup \bigl\{V_k: k\geq m\bigr\}   
		\end{align}
		has the F.I.P. with pairwise different bases, hence generating $2^{2^{\lambda_m}}$-many ultrafilters $v^*$ over $\biguplus_{m\leq k\in\omega}Z_k$, thus over $A$.  Any such extension $v^*\subseteq v\in \Uf(A)$ realizes $\tp(u)$ resulting $\<u\>_{S^\ue}^\omega\cong_P \<v\>^\omega_{S^\ue}$. 
	\end{proof}
	
	Intuitively speaking, the difference between the  constructions is that for ultrapowers  $\#\<[a]_\cD\>^\omega_{S^*}$ is \textit{as large as possible}, while for ultrafilter extension $\#\<u\>^\omega_{S^\ue}$ is \textit{as small as possible}.

	\section{Discussion}
	We are interested in  classes  having no  finite bound on the degree, while still preserving some of the desirable properties presented in this paper. Though locally finite structures are not elementary equivalent to their ultrafilter extensions in general (cf. \cite{molnar}), we wonder if elementary equivalence can be lifted up  to their  extensions. That is:
	\begin{enumerate}
		\item[(1)]  Does $\cA\equiv \cB$ imply $\cA^\ue\equiv \cB^\ue$, for locally finite structures? 
	\end{enumerate}

	Another promising class would be to bound the length of each path instead of bounding the degree. Fix $n\in \omega$, and let $\widetilde{R}$ be the symmetric closure of $R$.  The condition which we postulate for $\cA$ is: $$\forall x_0\dots\forall x_n(\bigwedge_{i=0}^{n-1} \widetilde{R}(x_i,x_{i+1}) \to \bigvee_{i\neq j} x_i= x_j).$$
	
	\begin{enumerate}
		\item[(2)] What properties are transferable to ultrafilter extensions for this class? 
	\end{enumerate}
	Note that,  using $R$ instead of $\widetilde{R}$, we have example for $\cA\not\equiv \cA^\ue$.

	\begin{example}
		Consider  three disjoint  sets $\mathbb{N}$,  $S= \{s_n : n\in \omega\}$, and $T =\{t_n: n\in \omega\}$. We define a structure $\cA=\<A, R\>$  as follows. For each $n\in\omega$ we let $S_n =\{n\}\times S$ and $T_n = S_n\times T $. Then let  $A= \mathbb{N}\cup \bigcup_{n\in \omega}(S_n\cup T_n)$, moreover $R$ is given by:
		\begin{align*}
			R = \{\<n, \<n, s_m\> \>: n,m\in \omega\}&\cup \{\<\<n, s_m\>, \<n, s_m, t_i\>\>: n,m,i\in\omega \} \\
			&\cup \{\<n, \<{j}, s_{m}, t_i\>\>:n,j,m,i\in \omega, n < j\}.
		\end{align*}
		In picture, $\cA$ looks as:
		
		\begin{center}
			\begin{tikzpicture}
				\filldraw[fill=white, draw=black, thick,rounded corners] (0,0) rectangle (6,.5);
				
				\draw[fill=black] (3,.25) circle [radius=0.05];
				\node [left ] at (3,.25) {$1$};
				\filldraw[fill=white, draw=black, thick,rounded corners] (2,2) rectangle (4,1.5);
				
				\draw[fill=black] (3,1.75) circle [radius=0.05];
				\node [left ] at (3,1.75) {};
				\draw[black, thick, -latex] (3,.25) -- (3,1.75);
				
				\draw[black, thick, -latex] (3,1.75) -- (2.8,2.6);
				\draw[black, thick, -latex] (3,1.75) -- (3.2,2.6);

				\draw[fill=black] (2.3,1.75) circle [radius=0.05];
				\draw[black, thick, -latex] (2.3,1.75) -- (2,2.6);
				\draw[black, thick, -latex] (2.3,1.75) -- (2.5,2.6);

				\draw[fill=black] (3.7,1.75) circle [radius=0.05];
				\draw[black, thick, -latex] (3.7,1.75) -- (4,2.6);
				\draw[black, thick, -latex] (3.7,1.75) -- (3.5,2.6);

				\draw[black, thick, -latex] (3,.2) -- (3.7,1.75);			
				\draw[black, thick, -latex] (3,.2) -- (2.3,1.75);


				\draw[fill=black] (5.3,.25) circle [radius=0.05];
				\node [left ] at (5.3,.25) {$2$};			
				
				\node [right ] at (5.3,.25) {$\dots$};			
				\filldraw[fill=white, draw=black, thick,rounded corners] (0,2) rectangle (1.5,1.5);
				\node [left ] at (1.1,1.7) {$S_{0}$};

				\draw[fill=black] (.7,.25) circle [radius=0.05];
				\node [right ] at (.7,.25) {${0}$};
				
				\draw[black, thick, -latex] (.7,.25) -- (1.5,1.6);				
				\draw[black, thick, -latex] (.7,.25) -- (0,1.6);

				\filldraw[fill=white, draw=black, thick,rounded corners] (4.5,2) rectangle (6,1.5);
				\node [left ] at (5.6,1.7) {$S_{2}$};
				
				\draw[black, thick, -latex] (5.3,.25) -- (4.5,1.6);				
				\draw[black, thick, -latex] (5.3,.25) -- (6,1.6);

				\filldraw[fill=white, draw=black, thick,rounded corners] (2,2.5) rectangle (2.5,3);

				\filldraw[fill=white, draw=black, thick,rounded corners] (3.5,2.5) rectangle (4,3);
				
				\filldraw[fill=white, draw=black, thick,rounded corners] (2.75,2.5) rectangle (3.25,3);

				\filldraw[fill=white, draw=black, thick,rounded corners] (0,2.5) rectangle (.5,3);
				\node [] at (0.25,2.7) {};
				
				\filldraw[fill=white, draw=black, thick,rounded corners] (1.5,2.5) rectangle (1,3);
				
				\filldraw[fill=white, draw=black, thick,rounded corners] (2.75,2.5) rectangle (3.25,3);

				\filldraw[fill=white, draw=black, thick,rounded corners] (4.5,2.5) rectangle (5,3);

				\filldraw[fill=white, draw=black, thick,rounded corners] (5.5,2.5) rectangle (6,3);
				
				\node[black, below] at (3,4) {		$ \overbrace{\hspace{6cm}}^{\bigcup_{n\in\omega}T_n} $};

				\draw[black, thick,dashed, -latex] (.7,.25) -- (3,2.75);				
				\draw[black, thick,dashed,-latex] (.7,.25) -- (2.25,2.75);				
				\draw[black, thick,dashed, -latex] (.7,.25) -- (3.75,2.75);				
				
				\draw[black, thick,dashed,-latex] (.7,.25) -- (4.7,2.75);				
				\draw[black, thick,dashed, -latex] (.7,.25) -- (5.7,2.75);

				\draw[black, thick,dashed, -latex] (3,.2) -- (4.7,2.75);		
				\draw[black, thick,dashed, -latex] (3,.2) -- (5.7,2.75);

			\end{tikzpicture}
		\end{center}
		Let $\varphi:= \exists x\exists y \exists z(R(x,y)\wedge R(y,z)\wedge R(x,z))$. Trivially, $\cA\not\models\varphi$, but one can construct  ultrafilters $u,v,z\in \Uf(A)$, for which $\cA^\ue\not\models \varphi$.
	\end{example}
	
	
	
	
	{}


\begin{thebibliography}{} 
		
	
		
	
		
		\bibitem{benthem}
		J. van Benthem,
		\newblock \textit{Notes on Modal Definability},
		\newblock Notre Dame J. Form. Log.  \textbf{30}(1), 20-25 (1989). DOI: \url{https://doi.org/10.2307/2273696}
		
		
		\bibitem{Benthem}
		J. van Benthem,
		\newblock \textit{Canonical Modal Logics and Ultrafilter Extensions},
		\newblock J. of Symb. Log.  \textbf{44}(1), 1-8 (1979). DOI: \url{https://doi.org/10.2307/2273696}
		
		
		
		\bibitem{correspondence}
		J. van Benthem,
		\newblock \textit{Correspondence Theory},  PhD thesis (University of Amsterdam, 1976).
		
		
			\bibitem{black} P. Blackbrun and M. de Rijke and Y. Venema ,\textit{Modal Logic}. Cambirdge University Press, (2001). 
		
		
		
			\bibitem{chang}  C. C. Chang and H. J. Keisler, \textit{Model Theory}, Studies in Logic and the Foundations of Mathematics Vol. 73 (North-Holland, Amsterdam, 1990).
		
			\bibitem{fine}
		K. Fine,
		\newblock \textit{Some Connections Between Elementary and Modal Logic}, in: \textit{Proceedings of the Third Scandinavian Logic Symposium}, edited by S. Kanger (North-Holland, Amsterdam, 1975), pp. 15-31. DOI: \url{https://doi.org/10.1016/S0049-237X(08)70723-7}
		
		\bibitem{goldblatt1}
		R. Goldblatt and I. Hodkinson and Y. Venema,
		\newblock  \textit{Erdős Graphs Resolve Fine's Canonicity Problem}, 
		\newblock  Bull. Symb. Log. \textbf{10}(2),  186-208 (2003). DOI: \url{https://doi.org/10.2178/bsl/1082986262}
		
		\bibitem{goldblatt}
		R. Goldblatt,
		\textit{Metamathematics of modal logic. Part I.}, Reports on Mathematical Logic, \textbf{74}(6), 41-78 (1976), \textit{Part II.},  ibid. \textbf{75}(1), 21-52 (1976).
		
		
		
		\bibitem{goldblatt2}
		R. Goldblatt,
		\newblock  \textit{Elementary generation and canonicity for varieties of Boolean algebras with operators},
		\newblock  Algebra Univers. \textbf{34},  551-607 (1995). Doi: \url{https://doi.org/10.1007/BF01181878}
		
	
		
		\bibitem{jonsson}
		B. Jónnson, A. Tarski,
		\textit{Boolean algebras with operators Part I.}, Am. J. Math \textbf{73}(4), 891-939 (1951), \textit{Part II.},  ibid. \textbf{74}(1), 127-162 (1952).
		
		\bibitem{molnar}
		Z. Molnár (2024), \textit{Ultrafilter extensions of bounded graphs are elementary}. Stud. Log. (online). DOI: \url{https://doi.org/10.1007/s11225-024-10148-8}
		
		\bibitem{Saveliev}
		D. I. Saveliev and S. Shelah, 
		\newblock \textit{Ultrafilter extensions do not preserve elementary
			equivalence},
		\newblock Math. Log. Q., \textbf{65}(4),  511-516 (2019). DOI: \url{https://doi.org/10.1002/malq.201900045}
	\end{thebibliography}
\end{document}